\DeclareMathOperator{\tr}{tr}
\newcommand{\Nil}{\mathrm{Nil}}
\newcommand{\abs}[1]{\left\lvert #1\right\rvert}
\newcommand{\bigabs}[1]{\bigl\lvert #1\bigr\rvert}
\newcommand{\norm}[1]{\left\lVert #1\right\rVert}
\newcommand{\bignorm}[1]{\bigl\lVert #1\bigr\rVert}
\renewcommand{\mid}{\;:\;}
\theoremstyle{plain}
\newtheorem{theorem}{Theorem}
\newtheorem{proposition}{Proposition}
\newtheorem{corollary}{Corollary}
\theoremstyle{definition}
\theoremstyle{remark}
\newtheorem*{remark}{Remark}
\subjclass
{32Q25, 32Q60, 35J60}
\thanks{This work was supported by the project FIRB ``Geometria Differenziale e teoria geometrica delle funzioni'',
 the project PRIN
\lq\lq  Variet\`a reali e complesse: geometria, topologia e analisi armonica" and by G.N.S.A.G.A. of I.N.d.A.M}
\begin{document}

\title[The Calabi-Yau equation on the Kodaira-Thurston manifold]{The Calabi-Yau equation on the Kodaira-Thurston manifold,
viewed as an $S^1$-bundle over a $3$-torus}
\author{E.~Buzano, A.~Fino and L.~Vezzoni}

\begin{abstract}
We prove that the Calabi-Yau equation on the  Kodaira-Thurston manifold has a unique solution for every
$S^1$-invariant initial datum.
\end{abstract}

\maketitle

\section{Introduction and statement of the result}
The celebrated Calabi-Yau theorem affirms that given a compact K\"ahler manifold $(M^n,\Omega,J)$ with first Chern class
$c_1(M^n)$, every $(1,1)$-form $\tilde \rho\in 2\pi c_1(M^n)$ is the \emph{ Ricci form} of a unique K\"ahler metric whose
K\"ahler form belongs to the cohomology class $[\Omega]$.  This theorem was conjectured by Calabi in \cite{C} and
subsequently proved by Yau in \cite{Y}. The Calabi-Yau theorem can be alternatively reformulated in terms of symplectic
geometry by saying that, given a compact K\"ahler manifold $(M^n,\Omega,J)$ and a volume form $\sigma$ satisfying the
normalizing condition
\begin{equation*}
\int_{M^n} \sigma=\int_{M^n} \Omega^n,
\end{equation*}
then there exists a unique K\"ahler form $\tilde \Omega$ on $(M^n,J)$ solving
\begin{equation}\label{eqn:105}
\tilde \Omega^n=\sigma,\qquad  [\tilde \Omega]=[\Omega].
\end{equation}
Equation~\eqref{eqn:105} still makes sense in the \emph{almost-K\"ahler} case, when $J$ is merely an almost-complex
structure. In this more general context \eqref{eqn:105} is usually called the \emph{ Calabi-Yau equation.}

In \cite{D} Donaldson described a project about compact symplectic $4$-manifolds involving the Calabi-Yau equation and
showed the uniqueness of the solutions. Donaldson's project is principally based on a conjecture stated in \cite{D}
whose confirmation would lead to new fundamental results in symplectic geometry.  Donaldson's project was partially
confirmed by Taubes in \cite{T2} and strongly motivates the study of the Calabi-Yau equation on non-K\"ahler
$4$-manifolds.

In \cite{W} Weinkove proved that the Calabi-Yau equation can be solved if the torsion of $J$ is sufficiently small and
in \cite{TWY} Tosatti, Weinkove and Yau proved the
 Donaldson  conjecture assuming an extra condition on the curvature and the torsion of the almost-K\"ahler metric.
Furthermore, Tosatti and Weinkove solved in \cite{TW} the Calabi-Yau equation on the Kodaira-Thurston manifold
assuming the initial datum $\sigma$ invariant under the action of a $2$-dimensional torus $T^2$.  The Kodaira-Thurston
is historically the first example of symplectic manifold without K\"ahler structures (see \cite{T3, Ab}) and it is
defined as the direct product of a compact quotient of the $3$-dimensional Heisenberg group by a lattice with the
circle $S^1$.  In \cite{FLSV} it is proved  that when $\sigma$ is $T^2$-invariant, the Calabi-Yau equation on the
Kodaira-Thurston manifold can be reduced to a Monge-Amp\`ere  equation on a torus which has always a solution. Moreover
in \cite{FLSV,BFV}  the same equation is studied in every $T^2$-fibration over a $2$-torus.

The \emph{ Kodaira-Thurston manifold} is defined as the compact $4$-manifold
\begin{equation*}
M=\Nil^3/\Gamma\times S^1,
\end{equation*}
 where
$\Nil^3$ is  the $3$-dimensional  real Heisenberg group
\begin{equation*}
\Nil^3=\left\{\left[\begin{smallmatrix}1&x&z\\0&1&y\\0&0&1\end{smallmatrix}\right]\mid x,y,z \in \mathbb R\right\}
\end{equation*}
and $\Gamma$ is  the lattice in $\Nil^3$ of matrices having integers entries.

Therefore $M$ is parallelizable  and has the global left-invariant co-frame
\begin{equation}\label{eqn:110}
e^1= d y,\quad e^2= dx,\quad e^3= dt,\quad e^4= dz-x dy
\end{equation}
satisfying the structure equations
\begin{equation}
\label{eqn:1}
 de^1= de^2= de^3=0,\quad  de^4=e^{12},
\end{equation}
with
\begin{equation*}
e^{ij}=e^i\wedge e^j.
\end{equation*}

Since $\Nil^3/\Gamma\times S^1=(\Nil^3\times \mathbb R)/(\Gamma\times \mathbb Z)$,  the Kodaira-Thurston manifold $M$
is a $2$-step nilmanifold and every left-invariant almost-K\"ahler structure on $\Nil^3 \times \mathbb R$ projects to
an almost-K\"ahler structure  on $M$. Moreover, the compact $3$-dimensional manifold $N=\Nil^3/\Gamma$ is the total
space of  an $S^1$-bundle over a $2$-dimensional torus $T^2$ with projection $\pi_{xy}\colon N\to T^2_{xy}$ and $M$
inherits a structure of  principal $S^1$-bundle over the $3$-dimensional torus $T^3=T^2_{xy}\times S^1_t $, i.e.
\begin{equation*}
\begin{CD}
S^1 @>>>  N\times S^1=M  \\
@.   @VVV \\
{}   @. T^2 \times S^1=T^3.
\end{CD}
\end{equation*}
Then it makes sense to consider differential forms  invariant by the action of the fiber  $S^1_z$. A $k$-form $\phi$ on $M$ is invariant by the action of the fiber $S^1_z$ if  its
coefficients with respect to the global basis $e^{j_1}\wedge \cdots \wedge e^{j_k}$ do not depend on
the variable $z$.

These observations allow to extend the analysis in \cite{TW,FLSV} from $T^2$-invariant to  $S^1$-invariant data  $\sigma$.

\medskip
Consider on $M$  the canonical metric
\begin{equation}\label{eqn:191}
g=\sum_{k=1}^4 e^k\otimes e^k,
\end{equation}
and the  compatible symplectic form
\begin{equation*}
\Omega=e^{13}+e^{42}.
\end{equation*}
The pair $(\Omega,g)$ specifies an almost-complex structure $J$  making $(\Omega,J)$ an almost-K\"ahler structure. Observe that 
\begin{equation*}
J e^1=e^3\quad\textup{and}\quad J e^4=e^2.
\end{equation*}
Then we can   consider the
 Calabi-Yau equation
\begin{equation}\label{eqn:103}
(\Omega+d\alpha)^2=\mathrm e^F\,\Omega^2,
\end{equation}
where the unknown $\alpha$ is a smooth $1$-form on $M$ such that
\begin{equation}\label{eqn:88}
J(d\alpha)=d\alpha,
\end{equation}
and the datum $F$ is a smooth function on $M$ satisfying
\begin{equation}\label{eqn:97}
\int_M\mathrm e^F\,\Omega^2 = \int_M \Omega^2.
\end{equation}
We have the following
\begin{theorem}\label{thm:12}
The Calabi-Yau equation \eqref{eqn:103} has a unique solution $\tilde\omega=\Omega+d\alpha$
for every  $S^1$-invariant volume form $\sigma=\mathrm e^F\,\Omega^2$
 such that
\begin{equation}\label{eqn:29}
\int_{T^3}\mathrm e^F\,dV=1,
\end{equation}
where $dV$ is the volume form $dx\wedge dy\wedge dt$ on $T^3$.
\end{theorem}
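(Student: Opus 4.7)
The plan is to prove existence by the method of continuity, reducing the problem to a PDE on the base three-torus $T^3 = T^2_{xy}\times S^1_t$; uniqueness of $\tilde\omega = \Omega+d\alpha$ is already a consequence of the general almost-K\"ahler uniqueness result of Donaldson recalled in the introduction. First I would reduce the equation to $T^3$. The $S^1_z$-invariance of the datum, together with the above uniqueness applied to the fibrewise $S^1_z$-average of any candidate, allows us to look for solutions of the form $\alpha = \alpha_1 e^1 + \alpha_2 e^2 + \alpha_3 e^3 + \alpha_4 e^4$ with coefficients $\alpha_i = \alpha_i(x,y,t)$. Because $de^i=0$ for $i\le 3$ and $de^4=e^{12}$, the 2-form $d\alpha$ is itself $S^1_z$-invariant and its coefficients in the basis $\{e^{ij}\}$ involve only the $T^3$-derivatives of the $\alpha_i$ and the undifferentiated $\alpha_4$. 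The almost-K\"ahler condition \eqref{eqn:88} eliminates two real components of $d\alpha$, and choosing a gauge for $\alpha$ modulo closed 1-forms eliminates another, so that the Calabi-Yau equation \eqref{eqn:103} becomes a fully nonlinear second-order elliptic equation on $T^3$, elliptic as long as $\tilde\omega$ remains tamed by $J$.

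Next I would set up the continuity family $(\Omega+d\alpha_s)^2=\mathrm e^{F_s}\Omega^2$ for $s\in[0,1]$, with $F_s = sF - c_s$ and $c_s\in\mathbb R$ chosen so that the normalization \eqref{eqn:97} holds at every $s$; at $s=0$ the trivial $\alpha_0=0$ solves the equation. Openness of the solvability set $S\subset[0,1]$ will follow from the implicit function theorem once the linearization of the reduced system on $T^3$ is computed, its ellipticity verified, and its invertibility modulo the gauge kernel established in suitable H\"older spaces.

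The main obstacle is the closedness of $S$, i.e.\ a priori $C^\infty$ estimates uniform in $s\in[0,1]$. In contrast with the $T^2$-invariant case of \cite{TW, FLSV}, the reduced equation here cannot be put in scalar Monge-Amp\`ere form because of the additional $t$-dependence on $T^3$. I expect the estimates to proceed in the usual order: a $C^0$ bound on a suitable scalar potential associated with $\alpha_s$ via a Moser iteration applied to the Calabi-Yau equation; a $C^2$ bound on the components of $d\alpha_s$ obtained by the maximum principle applied to a Weinkove-type trace quantity such as $\tr_{\tilde g}g$, where the torsion term coming from $de^4=e^{12}$ supplies the only non-constant corrections to be controlled; and higher regularity through Evans-Krylov followed by Schauder theory. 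The $C^0$ step appears the most delicate, since the system involves all four components $\alpha_i$ simultaneously and there is no direct analogue of the K\"ahler potential; the key idea should be to integrate out the $t$-variable appropriately, combine this with the $T^2$-invariant Moser estimates of \cite{TW}, and exploit the structure equations \eqref{eqn:1} to absorb the torsion contributions.
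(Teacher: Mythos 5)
Your outline correctly identifies the overall strategy (continuity method, Donaldson's uniqueness, Moser iteration for $C^0$, a maximum-principle second-order bound, Schauder), but it is built on a misreading of what the $S^1$-invariant reduction actually yields, and the gap is exactly where the technical heart of the argument lies.

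You assert that ``the reduced equation here cannot be put in scalar Monge-Amp\`ere form because of the additional $t$-dependence'' and that there is ``no direct analogue of the K\"ahler potential,'' so you propose to estimate the four components $\alpha_i(x,y,t)$ simultaneously. That is the wrong turn. The crucial observation, stated in Theorem~\ref{thm:1}, is that one \emph{can} use a scalar potential: taking
\begin{equation*}
\alpha=d^{\mathrm c}u-u\,e^1,\qquad u\colon T^3\to\mathbb R,\quad \int_{T^3}u\,dV=0,
\end{equation*}
the condition $J(d\alpha)=d\alpha$ is automatic (because $d(ue^1)$ exactly cancels the non-$(1,1)$ part of $dd^{\mathrm c}u$ coming from $de^4=e^{12}$), and the Calabi-Yau equation collapses to the single scalar equation
\begin{equation*}
(u_{xx}+1)(u_{yy}+u_{tt}+u_t+1)-u_{xy}^2-u_{xt}^2=\mathrm e^F
\end{equation*}
on $T^3$. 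It is not quite Monge--Amp\`ere (because of the first-order $u_t$), but it is one real second-order equation for one real unknown, and all subsequent estimates are done on $u$, not on a $4$-component system. Without this reduction your ``gauge-fix and estimate $\alpha_i$'' plan is not a proof sketch that can be checked; the form degree bookkeeping alone will not give you a scalar quantity to which Moser iteration or a maximum principle can be applied.

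There is a second, subtler gap that follows from the first. The extra term $-u\,e^1$ in $\alpha$ spoils the clean integration-by-parts identity used in Yau's $C^0$ argument: one gets an additional term proportional to $\frac{p^2}{16}\|u\|_{L^p}^p$ in the Moser inequality, which cannot be absorbed without first proving the elementary a priori bound $|u_x|\le 1$ (Proposition~\ref{pro:10}), obtained purely from $u_{xx}>-1$ and periodicity in $x$. Similarly, the second-order bound is obtained not by applying the maximum principle to $\tr_{\tilde g}g$ directly, but to $\Phi=(\Delta u+u_t+2)\mathrm e^{-\mu u}$, again exploiting the scalar structure, and the $C^{2+\rho}$ step uses the Tosatti--Wang--Weinkove--Yang $C^{2,\alpha}$ estimate \cite{TWWY} rather than Evans--Krylov (the latter does not apply cleanly since the equation is not concave in the Hessian). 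Your plan to ``integrate out the $t$-variable and combine with the $T^2$-invariant estimates of \cite{TW}'' does not lead anywhere concrete precisely because the $t$-dependence survives in the scalar equation; the paper instead handles the full $3$-dimensional problem from the start.

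So: the high-level scaffolding (continuity method with a normalized path $F_\tau$, openness via the implicit function theorem, closedness via a priori estimates) matches the paper, but the essential idea — the explicit scalar potential ansatz $\alpha=d^{\mathrm c}u-u\,e^1$ and the resulting equation~\eqref{eqn:8} — is missing, and you explicitly argue against its existence. Everything in the paper hangs on it.
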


Since uniqueness follows from a general result in \cite{D}, we need only to prove existence. This  will be done in
two steps. First in Section~\ref{sec:1}  we reduce equation~\eqref{eqn:103} to a fully nonlinear PDE on the
$3$-dimensional base torus $T^3$. Then in Section~\ref{sec:5} we show that such an equation is solvable. Section
\ref{sec:4} concerns the a-priori estimates needed in Section~\ref{sec:5}.

\bigskip
 With some minor changes in the proof, it is possible to generalize Theorem \ref{thm:12} to the larger class of invariant
almost-K\"ahler structures on the Kodaira-Thurston manifold.
All positively-oriented invariant almost-K\"ahler structures compatible with the canonical metric \eqref{eqn:191} can be obtained by rotating the  symplectic form $\Omega=e^{13}+e^{42}$. Indeed, since the three forms
\begin{equation*}
\Omega=e^{13}+e^{42}\,,\quad \Omega'=e^{14}+e^{23}\,,\quad \Omega''=e^{12}+e^{34}
\end{equation*}
are a basis of invariant self-dual $2$-forms, every positively-oriented invariant $2$-form  $\omega$ compatible with
$g$ can be written as
\begin{equation*}
\omega=A\Omega+B\Omega'+C\Omega''
\end{equation*}
for some constants $A,B,C$ satisfying $A^2+B^2+C^2=1$. The condition $d\omega=0$ is equivalent to $C=0$ and therefore every positively oriented symplectic $2$-form compatible with $g$ can be written as
 \begin{equation*}
\omega_\theta=(\cos\theta\, e^1+\sin\theta\, e^2)\wedge e^3-(-\sin\theta\, e^1+\cos\theta\, e^2)\wedge e^4,
\end{equation*}
for some $\theta\in [0,2\pi)$.

\begin{theorem}\label{thm:10}
Assume either $\cos\theta=0$ or $\tan\theta\in\mathbb Q$.
Then the Calabi-Yau equation
\begin{equation*}
(\omega_{\theta}+d\alpha)^2={\rm e}^F\omega_\theta^2,\,\quad J_\theta(d\alpha)=0
\end{equation*}
has a unique solution $\tilde\omega=\omega_\theta+d\alpha$ for every  $S^1$-invariant
volume form $\sigma=\mathrm e^F\,\omega_\theta^2$ satisfying \eqref{eqn:29}.
\end{theorem}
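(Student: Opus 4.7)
The plan is to reduce Theorem~\ref{thm:10} to Theorem~\ref{thm:12} through a rotation of the coframe which, under the hypothesis on $\theta$, is either an intrinsic symmetry of $M$ or descends from one on a finite cover. Set
\begin{equation*}
\tilde e^1=\cos\theta\,e^1+\sin\theta\,e^2,\qquad \tilde e^2=-\sin\theta\,e^1+\cos\theta\,e^2,\qquad \tilde e^3=e^3,\qquad \tilde e^4=e^4.
\end{equation*}
From~\eqref{eqn:1} one checks that this coframe again satisfies $d\tilde e^1=d\tilde e^2=d\tilde e^3=0$, $d\tilde e^4=\tilde e^{12}$, and that $\omega_\theta=\tilde e^{13}+\tilde e^{42}$. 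Hence at the Lie-algebra level $(\omega_\theta,J_\theta)$ is formally identical to $(\Omega,J)$, and the reduction of Section~\ref{sec:1}, the estimates of Section~\ref{sec:4}, and the continuity method of Section~\ref{sec:5} carry over with only cosmetic modifications as soon as $\tilde e^1,\tilde e^2$ are (scalar multiples of) coordinate differentials on a torus base; this is exactly what the hypothesis on $\theta$ guarantees.

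If $\cos\theta=0$, then $\omega_\theta=\pm(e^{14}+e^{23})$ and the diffeomorphism $(x,y,z,t)\mapsto(\mp y,\pm x,z,t)$ is a global isomorphism of $M$ preserving $\Gamma$ and the $S^1_z$-action, which transports the Calabi-Yau equation for $\omega_\theta$ to the one for $\Omega$; Theorem~\ref{thm:12} then applies directly. If instead $\tan\theta=p/q$ with $\gcd(p,q)=1$ and $r=\sqrt{p^2+q^2}$, I pass to a finite cover $\pi\colon\tilde M\to M$ of degree $r^2$ associated with an index-$r^2$ sublattice $\tilde\Gamma\subset\Gamma$ adapted to the rotation: such a $\tilde\Gamma$ is generated by two elements of $\Nil^3$ whose $(x,y)$-projections are $(q,p)$ and $(-p,q)$, together with the centre $(0,0,1)$, and a direct Heisenberg computation shows that the commutator of the first two generators equals $(0,0,r^2)\in\tilde\Gamma$, so $\tilde\Gamma$ is indeed a subgroup. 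On $\tilde M$ the pulled-back forms $\pi^*\tilde e^1,\pi^*\tilde e^2$ are constant multiples of coordinate differentials on the covering base $\tilde T^3$, and $\pi^*\sigma$ is $S^1_z$-invariant and satisfies the analogue of~\eqref{eqn:29} up to the covering degree; Theorem~\ref{thm:12} applied on $\tilde M$ thus yields a unique solution $\tilde\omega$. Since the deck transformation group $G=\Gamma/\tilde\Gamma$ preserves the data, $g^*\tilde\omega$ solves the same equation for every $g\in G$; by uniqueness $g^*\tilde\omega=\tilde\omega$, and therefore $\tilde\omega$ descends to a solution on $M$. Uniqueness on $M$ itself is again Donaldson's result from~\cite{D}.

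The main obstacle is careful bookkeeping for the lift: one must verify that the rescaled almost-K\"ahler structure on $\tilde M$ (whose base torus has area $r^2$ and whose $S^1$-fibration acquires first Chern number $r^2$) can be conformally transported into the precise setting of Theorem~\ref{thm:12}, and that the normalisation~\eqref{eqn:29} is preserved under the pull-back by $\pi$ and under the rescaling of $\tilde e^1,\tilde e^2$ by the constant $r$. The rationality condition is unavoidable: for irrational $\tan\theta$ no finite cover of $M$ absorbs the rotation, and the reduction to a PDE on a compact base breaks down.
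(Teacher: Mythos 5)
Your strategy — absorb the rotation into a finite cover of $M$ and then descend a deck-invariant solution — is a genuinely different route from the paper's. The paper (Section~\ref{sec:6}) works directly on $M$: it performs the planar rotation $x=\cos\theta\,p+\sin\theta\,q$, $y=-\sin\theta\,p+\cos\theta\,q$, which makes the PDE formally identical to \eqref{eqn:8} but replaces $1$-periodicity by periodicity under the rotated lattice, observes that the rationality hypothesis is exactly what guarantees a period in the $p$-direction (namely $\sqrt{m^2+n^2}$), and re-runs the estimates of Section~\ref{sec:4} with the new periodicity; no covering space is introduced. Your formulation makes the role of the arithmetic condition more transparent (it is the obstruction to a finite cover straightening the rotation), which is a nice conceptual gain.

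However, there are concrete gaps you would need to fill.
First, for $\cos\theta=0$ the map $(x,y,z,t)\mapsto(\mp y,\pm x,z,t)$ does \emph{not} descend to $M$: to get an automorphism of $\Nil^3$ that sends the coframe $(e^1,e^2,e^3,e^4)$ to $(e^2,-e^1,e^3,e^4)$ you must correct the $z$-coordinate, e.g.\ $(x,y,z,t)\mapsto(-y,x,z-xy,t)$; only then does $e^4=dz-x\,dy$ pull back to itself and the map commute with $\Gamma$.
Second, the sublattice you propose is not the one that trivializes the rotation. In the $(p,q)$-coordinates one has $\tilde e^1=dq$, $\tilde e^2=dp$, and the translations that shift $(p,q)$ by $(r,0)$ and $(0,r)$ correspond to the $(x,y)$-vectors $(m,-n)$ and $(n,m)$ (with the paper's $\cos\theta=m/r$, $\sin\theta=n/r$). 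Your generators $(q,p)=(m,n)$ and $(-p,q)=(-n,m)$ give instead the lattice $rR_{2\theta}\mathbb Z^2$ in $(p,q)$-coordinates, so $\pi^*\tilde e^1,\pi^*\tilde e^2$ are \emph{not} multiples of lattice-adapted coordinate differentials on your cover; you would still face a non-rectangular periodicity (with a cruder $p$-period $r^3$). Replacing your generators by ones with $(x,y)$-projections $(q,-p)$ and $(p,q)$ fixes this.
Third, and most importantly, "Theorem~\ref{thm:12} applied on $\tilde M$" cannot be invoked as stated. The nilmanifold $\Nil^3/\tilde\Gamma$ has Euler number $r^2$ for its circle fibration over the enlarged torus, so $\tilde M$ is \emph{not} diffeomorphic to $M$ when $r>1$, and no conformal rescaling can change the Chern number; what you actually need is the \emph{analogue} of Theorem~\ref{thm:12} on $\tilde M$, proved by re-running Sections~\ref{sec:1}--\ref{sec:5} with the rescaled period and base volume. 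In other words, the covering trick does not let you cite Theorem~\ref{thm:12} as a black box — you still have to redo its proof with $\theta$-dependent constants, which is essentially the same "slight modification" the paper leaves to the reader after the coordinate rotation. The deck-invariance argument at the end is fine once one has the analogue on $\tilde M$, and uniqueness on $M$ indeed follows from \cite{D} as you say.
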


In Section \ref{sec:6} we give some details on how to modify the proof of  Theorem~\ref{thm:12} in order to prove Theorem~\ref{thm:10}.

Observe that for $\theta=0$, $\omega_0$ is the form  $\Omega=e^{13}+e^{42}$ considered in Theorem~\ref{thm:12}, while $\omega_{\pi/2}=e^{14}+e^{23}$ is the symplectic form $\Omega'$.

\medskip
\noindent \emph{Acknowledgements}. We  would like to thank Valentino Tosatti  for  useful  remarks and helpful
comments on a preliminary version of the present paper. Moreover, we are grateful to the anonymous referee for useful comments and improvements.

\section{Reduction to a single elliptic equation}\label{sec:1}

The dual frame of \eqref{eqn:110} is
\begin{equation*}
e_1=\partial_y+x\partial_z,\quad e_2=\partial_x,\quad
e_3=\partial_t,\quad e_4=\partial_z.
\end{equation*}
If $u$ is $S^1$-invariant, it does not depend on $z$ and we have
\begin{equation*}
e_1 u=\partial_yu=u_y,\quad e_2 u=\partial_xu=u_x,\quad e_3 u=\partial_tu=u_t,\quad e_4 u=0.
\end{equation*}
It is convenient to set
\begin{equation}\label{eqn:171}
\partial_1=\partial_y,\quad\partial_2=\partial_x,\quad\partial_3=\partial_t,
\end{equation}
so
the differential  can be written as
\begin{equation*}
du=\sum_{i=1}^3\partial_iu\,e^i.
\end{equation*}

\begin{theorem}\label{thm:1}
 Given a smooth function
$u:T^3\to \mathbb R$ such that
\begin{equation}\label{eqn:96}
\int_{T^3} u\,dV =0,
\end{equation}
set
\begin{equation}\label{eqn:72} \alpha=d^{\mathrm c}u-ue^1.
\end{equation}
Then
the $1$-form~\eqref{eqn:72}
satisfies equation~\eqref{eqn:88}. Moreover $\alpha$ solves equation~\eqref{eqn:103}
if and only if $u$ is a solution to the fully non-linear PDE
\begin{equation}\label{eqn:8}
(u_{xx}+1)(u_{yy}+u_{tt}+u_t+1)-u_{xy}^2-u_{xt}^2={\mathrm e}^F.
\end{equation}
 \end{theorem}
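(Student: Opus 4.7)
The plan is to verify Theorem~\ref{thm:1} by direct computation in the invariant coframe, in two steps: first checking that the proposed $\alpha=d^{\mathrm c}u-ue^1$ satisfies $J(d\alpha)=d\alpha$, and then expanding $(\Omega+d\alpha)^2$ and comparing with $\mathrm e^F\Omega^2$ to read off~\eqref{eqn:8}. The normalization~\eqref{eqn:96} plays no role in this equivalence itself (since $d^{\mathrm c}c=0$ and $de^1=0$ imply that $u$ and $u+c$ give the same $d\alpha$); it is there only to pin down $u$ uniquely among solutions producing the same $\tilde\omega$.

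First, using $du=u_y\,e^1+u_x\,e^2+u_t\,e^3$ together with $Je^1=e^3$, $Je^4=e^2$ (so $Je^3=-e^1$, $Je^2=-e^4$), I would write out $d^{\mathrm c}u$ explicitly in the basis $e^1,\ldots,e^4$ and hence obtain a closed-form expression for $\alpha$. Applying $d$ and using the structure equations~\eqref{eqn:1}, in particular $de^4=e^{12}$ together with $de^1=de^2=de^3=0$, I would compute the six components of $d\alpha$ in the basis $e^{ij}$. The three terms of $\alpha$ produce the $e^{13}$ coefficient $u_{yy}+u_{tt}+u_t$, the $e^{24}$ coefficient $-u_{xx}$, and off-diagonal contributions $u_{xt}$, $\pm u_{xy}$ matching as $e^{12}\leftrightarrow e^{34}$ and $e^{14}\leftrightarrow e^{23}$ pairs (with the contribution $-u_x\,e^{12}$ coming from $de^4$ cancelling part of another term).

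To verify $J(d\alpha)=d\alpha$, I note that $J$ acts on invariant $2$-forms by $J(e^{13})=e^{13}$, $J(e^{24})=e^{24}$, $J(e^{12})=-e^{34}$ and $J(e^{14})=-e^{23}$. Hence the $(1,1)$-condition reduces to the symmetry of the mixed coefficients, which follows from $u_{xt}=u_{tx}$ and from the opposite signs of the $e^{14}$ and $e^{23}$ terms being compatible with the swap $e^{14}\leftrightarrow -e^{23}$. So~\eqref{eqn:88} holds.

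For the second step, I would use the standard identity $\omega\wedge\omega=2(a_{12}a_{34}-a_{13}a_{24}+a_{14}a_{23})\,e^{1234}$ for $\omega=\sum_{i<j}a_{ij}e^{ij}$, together with $\Omega=e^{13}-e^{24}$ and $\Omega^2=2\,e^{1234}$. Writing $\Omega+d\alpha$ as a single $2$-form and inserting the coefficients computed above, the cross terms $a_{12}a_{34}=-u_{xt}^2$ and $a_{14}a_{23}=-u_{xy}^2$ combine with the diagonal term $-a_{13}a_{24}=(u_{xx}+1)(u_{yy}+u_{tt}+u_t+1)$ to yield precisely the left-hand side of~\eqref{eqn:8}, while the right-hand side $\mathrm e^F\Omega^2/2\cdot 2=\mathrm e^F\cdot 2\,e^{1234}$ supplies $\mathrm e^F$. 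Equation~\eqref{eqn:103} is therefore equivalent to~\eqref{eqn:8}. The computation is entirely bookkeeping; the only place where one has to be attentive is the sign management when checking $J$-invariance and when combining the $-u_x\,e^{12}$ contribution of $de^4$ with the rest, so I would carry out those pieces carefully while leaving the rest to routine verification.
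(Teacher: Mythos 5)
Your proposal is correct and follows essentially the same strategy as the paper: both compute $d\alpha$ explicitly in the invariant coframe using the structure equations (in particular the crucial cancellation between the $-u_x\,e^{12}$ term from $u_x\,de^4$ and the $+u_x\,e^{12}$ term from $-d(ue^1)$), observe that the resulting coefficients pair as $a_{12}=-a_{34}$, $a_{14}=-a_{23}$ to give a $(1,1)$-form, and then expand $(\Omega+d\alpha)^2$ to read off \eqref{eqn:8}. The paper packages the first step a bit more compactly via the identity $dd^{\mathrm c}u=\sum_{i,j}\partial_i\partial_ju\,e^i\wedge Je^j-\partial_2u\,e^{12}$, but the computation is the same.
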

\begin{proof}
Thanks to  \eqref{eqn:1} we have
\begin{align*}  dd^{\mathrm c} u&=\sum_{i=1}^3\sum_{j=1}^3\partial_i\partial_ju\,e^i\wedge Je^j-\partial_2u\,e^{12}
\\&=\sum_{i=1}^3\sum_{j=1}^3\partial_i\partial_j u\,e^i\wedge Je^j+d(ue^1)+\partial_3u\,e^{13}.
\end{align*}
Therefore $d\alpha$ is of type $(1,1)$ and
\begin{equation*}
d\alpha\begin{aligned}[t]&=\sum_{i=1}^3\sum_{j=1}^3\partial_i\partial_j u\,e^i\wedge Je^j+\partial_3 u\,e^{13}
\\&=(u_{yy}+u_{tt}+u_t)e^{13}-u_{xx}e^{24}+u_{xy}(e^{23}-e^{14})+u_{xt}(e^{12}-e^{34}).\end{aligned}
\end{equation*}
Then a simple computation shows that  $\alpha$  satisfies \eqref{eqn:103} if and only if $u$  satisfies \eqref{eqn:8}.
\end{proof}

We end this section by proving ellipticity of equation~\eqref{eqn:8}.

First we fix some notation.
 Functions on the $3$-torus can be
identified with functions $u:\mathbb R^3\to\mathbb R$ which are $1$-periodic in each variable.

For any non-negative integer $n$, we denote by $C^{n}(T^3)$  the Banach space of $C^n$ functions $u\colon T^3\to
\mathbb R$ equipped with norm
\begin{equation*}
\norm{ u}_{C^n}=\max_{m\le n}\abs{ u}_{C^m},
\end{equation*}
where\,\footnote{\ We employ Schwartz multi-index notation:
$\partial^\kappa=\partial^{\kappa_1}_1\partial^{\kappa_2}_2\partial^{\kappa_3}_3$ and
$\abs{\kappa}=\kappa_1+\kappa_2+\kappa_3$.}
\begin{equation*}
 \abs{ u}_{C^m}=\max_{\abs{\kappa}=m}\sup_{q\in\mathbb R^3}\bigabs{ \partial^\kappa u(q)}.
\end{equation*}

Given $0<\rho< 1$ and $u\in C^0(T^3)$, we set
\begin{equation*}
\bigl[\!\!\bigl[u(q)\bigr]\!\!\bigr]_\rho=\sup_{\scriptscriptstyle 0<\abs{h}\le 1}
\bigabs{u(q+h)-u(q)}\,\abs{h}^{-\rho}.
\end{equation*}
For every non-negative integer  $n$ and real number $0<\rho<1$, define the space $C^{n+\rho}(T^3)$ of  functions $u\in
C^n(T^3)$ such that
\begin{equation*}
\abs{ u}_{C^{n+\rho}}=
\max_{\abs{\kappa}= n}\sup_{q\in\mathbb R^3}\bigl[\!\!\bigl[\partial^\kappa u(q)\bigr]\!\!\bigr]_\rho
<\infty.
\end{equation*}
$C^{n+\rho}(T^3)$ is a Banach space with respect to the norm
\begin{equation*}
\norm{ u}_{C^{n+\rho}}=\max\Bigl\{\norm{ u}_{C^n},\abs{ u}_{C^{n+\rho}}\Bigr\}.
\end{equation*}
In conclusion we have defined  $C^{\sigma}(T^3)$ for every non negative real number $\sigma$.

Finally, we denote by $\tilde C^{\sigma}(T^3)$ the closed subspace of all $u\in C^\sigma(T^3)$ satisfying
\begin{equation*}
\int_{T^3}u\, dV  =0.
\end{equation*}

\begin{proposition}\label{pro:1}
Let $u\in \tilde C^2(T^3)$ be a solution to \eqref{eqn:8}. Then we have
\begin{equation}
\label{eqn:9}
u_{xx}> -1
\end{equation}
and
\begin{equation}\label{eqn:34}
u_{yy}+u_{tt}+u_t> -1.
\end{equation}
\end{proposition}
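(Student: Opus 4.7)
The plan is to view equation \eqref{eqn:8} as an inequality by moving the squared mixed derivatives to the right-hand side, obtaining
\begin{equation*}
(u_{xx}+1)(u_{yy}+u_{tt}+u_t+1) = \mathrm{e}^F + u_{xy}^2 + u_{xt}^2 \geq \mathrm{e}^F > 0.
\end{equation*}
Since $u\in \tilde C^2(T^3)$, both factors on the left-hand side are continuous functions on the connected manifold $T^3$, and their product is strictly positive everywhere. Consequently neither factor can vanish, so by connectedness each factor has constant sign, and the two signs must agree.

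To pin down which sign occurs, I would integrate each factor over $T^3$ and use the periodicity of $u$ and its derivatives. The torus integrals of $u_{xx}$, $u_{yy}$, $u_{tt}$, and $u_t$ all vanish, so
\begin{equation*}
\int_{T^3}(u_{xx}+1)\,dV = \int_{T^3}(u_{yy}+u_{tt}+u_t+1)\,dV = \int_{T^3} dV > 0.
\end{equation*}
A function of constant sign on $T^3$ with strictly positive integral must be strictly positive everywhere, which immediately gives \eqref{eqn:9} and \eqref{eqn:34}.

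There is essentially no technical obstacle in this proposition; it is a clean sign argument. The only subtlety worth stating carefully is the combination of the two ingredients: strict positivity of the product (which, via connectedness of $T^3$, forbids sign changes of each individual factor) together with the positivity of each averaged factor (which excludes the possibility that both factors are negative constants of sign). Together these two observations force both factors to be pointwise positive, which is exactly the content of the proposition.
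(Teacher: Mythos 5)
Your proof is correct. The first step — rewriting \eqref{eqn:8} as
\begin{equation*}
(u_{xx}+1)(u_{yy}+u_{tt}+u_t+1)=\mathrm e^F+u_{xy}^2+u_{xt}^2\ge \mathrm e^F>0
\end{equation*}
and deducing that each factor has a single, common sign on the connected manifold $T^3$ — coincides with the paper's. The difference lies in how you pin down that common sign: you integrate each factor over $T^3$ and use periodicity to conclude that $\int_{T^3}(u_{xx}+1)\,dV=\int_{T^3}(u_{yy}+u_{tt}+u_t+1)\,dV=1>0$, whereas the paper evaluates $u_{xx}+1$ at a point where $u$ attains its minimum, where the second-derivative test forces $u_{xx}+1\ge1$. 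Both are one-line arguments exploiting compactness of $T^3$; your integration argument is slightly more symmetric since it treats the two factors on an equal footing and relies only on periodicity rather than on the second-order extremum condition. Either determination of sign, combined with the constant-sign observation, yields \eqref{eqn:9} and \eqref{eqn:34}, so the proposal is sound.
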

\begin{proof}
Indeed, from equation~\eqref{eqn:8}, we have
\begin{equation*}
(u_{yy}+u_{tt}+u_t+1)(u_{xx}+1)\ge \mathrm e^F>0.
\end{equation*}
This implies that $u_{yy}+u_{tt}+u_t+1$ and $u_{xx}+1$ have always the same sign. But at a point where $u$ attains its
minimum we must have
\begin{equation*}
u_{xx}+1\ge 1. \qedhere
\end{equation*}
\end{proof}

Let
\begin{equation*}
\Delta u=u_{xx}+u_{yy}+u_{tt},
\end{equation*}
be the standard Laplacian in $\mathbb R^3$.

Now we prove ellipticity of equation \eqref{eqn:8}.
 \begin{proposition}\label{pro:5} Let
$u\in \tilde C^2(T^3)$ be a solution to  equation~\eqref{eqn:8}. Then we have
\begin{equation}\label{eqn:36}
0<2\mathrm e^{F/2}\le \Delta u+u_t+2,
\end{equation}
and
\begin{multline}\label{eqn:168}
(u_{xx}+1)(\eta^2+\tau^2)
+(u_{yy}+u_{tt}+u_t+1)\xi^2-
2u_{xy}\xi\eta-2u_{xt}\xi\tau\ge
\\ \ge \Lambda(u)\bigl(\xi^2+\eta^2+\tau^2\bigr),\qquad\textup{for all $(\xi,\eta,\tau)\in\mathbb R^3$},
\end{multline}
where
\begin{equation}\label{eqn:61}
\Lambda(u)=\frac 12\Bigl(\Delta u+u_t+2-\sqrt{(\Delta u+u_t+2)^2-4\mathrm e^F}\Bigr).
\end{equation}
\end{proposition}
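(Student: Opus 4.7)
The plan is to derive both estimates from an eigenvalue analysis of the symmetric matrix representing the quadratic form on the left-hand side of \eqref{eqn:168}. I set $a = u_{yy}+u_{tt}+u_t+1$ and $d = u_{xx}+1$, so that Proposition~\ref{pro:1} yields $a, d > 0$, and equation~\eqref{eqn:8} becomes $a d = \mathrm e^F + u_{xy}^2 + u_{xt}^2$.

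For \eqref{eqn:36}, the AM--GM inequality gives
\begin{equation*}
\Delta u + u_t + 2 = a + d \ge 2\sqrt{a d} = 2\sqrt{\mathrm e^F + u_{xy}^2 + u_{xt}^2} \ge 2\mathrm e^{F/2}.
\end{equation*}
As a byproduct, $(\Delta u + u_t + 2)^2 - 4\mathrm e^F \ge 0$, so $\Lambda(u)$ in \eqref{eqn:61} is a well-defined non-negative real number.

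For \eqref{eqn:168}, I write the matrix of the quadratic form as
\begin{equation*}
A = \begin{pmatrix} a & -u_{xy} & -u_{xt} \\ -u_{xy} & d & 0 \\ -u_{xt} & 0 & d \end{pmatrix}
\end{equation*}
and exploit its block structure. Provided $(u_{xy}, u_{xt}) \ne (0,0)$, the vector $(0, u_{xt}, -u_{xy})$ is an eigenvector of $A$ with eigenvalue $d$; on its orthogonal complement $A$ reduces to a $2\times 2$ symmetric block of trace $a + d = \Delta u + u_t + 2$ and determinant $a d - u_{xy}^2 - u_{xt}^2 = \mathrm e^F$, whose eigenvalues are the roots of $\lambda^2 - (a+d)\lambda + \mathrm e^F = 0$. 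The smaller root is exactly $\Lambda(u)$ as given by \eqref{eqn:61}. The degenerate case $u_{xy} = u_{xt} = 0$ is immediate: $A$ is diagonal with eigenvalues $a, d, d$ and $\Lambda(u) = \min(a, d)$.

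It remains to verify that $\Lambda(u)$ is actually the smallest of the three eigenvalues of $A$, i.e.\ that $d \ge \Lambda(u)$. This inequality reduces to $\sqrt{(a+d)^2 - 4\mathrm e^F} \ge a - d$, which is trivial when $a \le d$, and, in the opposite case, squares to $a d \ge \mathrm e^F$, granted by \eqref{eqn:8}. Hence $A \ge \Lambda(u)\,I$ on $\mathbb R^3$, which is precisely \eqref{eqn:168}. The argument is routine linear algebra once the block structure of $A$ is noticed; the only point deserving care is this last comparison between $d$ and the smaller root of the $2\times 2$ characteristic polynomial, which is what prevents one from reading off $\Lambda(u)$ as the smallest eigenvalue of $A$ for free.
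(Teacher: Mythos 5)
Your proof is correct and follows essentially the same strategy as the paper: both establish \eqref{eqn:36} by the AM--GM inequality applied to $a=u_{yy}+u_{tt}+u_t+1$ and $d=u_{xx}+1$ together with \eqref{eqn:8}, and both establish \eqref{eqn:168} by analyzing the eigenvalues of the $3\times 3$ matrix of the quadratic form. The only cosmetic difference is that the paper computes the characteristic polynomial directly and factors it as $\bigl(\lambda-(u_{xx}+1)\bigr)\bigl(\lambda^2-(\Delta u+u_t+2)\lambda+\mathrm e^F\bigr)$, whereas you exhibit the eigenvector $(0,u_{xt},-u_{xy})$ and reduce to a $2\times 2$ block, which makes the factorization structurally transparent; your final verification that $\Lambda(u)\le u_{xx}+1$ is the same inequality the paper checks (the paper writes the radicand as $(a-d)^2+4(u_{xy}^2+u_{xt}^2)$, modulo an obvious factor-of-$4$ typo there, and compares with $(a-d)^2$, which is your squaring step).
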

\begin{remark}
The left-hand-side of \eqref{eqn:168}  is the principal symbol of the linearization of \eqref{eqn:8} at the solution $u$.
Since a non-linear equation is elliptic on a set $S$ if its linearization at any $u\in S$ is elliptic, we have that equation \eqref{eqn:8} is elliptic on the set of all of its solutions $u\in \tilde C^2(T^3)$.
\end{remark}
\begin{proof}
Inequality \eqref{eqn:36} follows from  \eqref{eqn:9}, \eqref{eqn:34} and \eqref{eqn:8}.

A simple computation shows that the characteristic polynomial of the matrix
\begin{equation*}
P(u)=\begin{bmatrix}
u_{yy}+u_{tt}+u_t+1 & u_{xy} & u_{xt} \\
u_{xy} & u_{xx}+1 & 0 \\
u_{xt} & 0 & u_{xx}+1
\end{bmatrix}
\end{equation*}
associated to  the quadratic form at left-hand side
of \eqref{eqn:168} is
\begin{equation*}
\bigl(\lambda-(u_{xx}+1)\bigr)\bigl(\lambda^2-(\Delta u+u_t+2)\lambda+\mathrm e^F\bigr).
\end{equation*}
Then the eigenvalues of $P(u)$ are \begin{equation*}\lambda_\pm=\frac 12\Bigl(\Delta u+u_t+2\pm\sqrt{(\Delta u+u_t+2)^2-4\mathrm
e^F}\Bigr)\end{equation*} and $u_{xx}+1$. Since
\begin{align*}
(\Delta u+u_t+2)^2-4\mathrm e^F&=\bigl((u_{yy}+u_{tt}+u_t+1)-(u_{xx}+1)\bigr)^2+u_{xy}^2+u_{xt}^2
\\&\ge
\bigl((\Delta u +u_t+2)-2(u_{xx}+1)\bigr)^2,
\end{align*}
we have
\begin{equation*}
\lambda_-\le u_{xx}+1\le \lambda_+,
\end{equation*}
and the proof is complete.
\end{proof}

\section{A priori estimates}\label{sec:4}
\subsection{$C^0$-estimate}

\begin{proposition}\label{pro:10}
We have
\begin{equation}
\label{eqn:10}
\bigabs{u_x}\le 1,
\end{equation}
for all  solution $u$ to  \eqref{eqn:8}.
\end{proposition}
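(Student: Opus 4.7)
The plan is to combine the one-sided bound $u_{xx}+1>0$ from Proposition~\ref{pro:1} with the fact that $u$ is $1$-periodic in $x$. The idea is simple: $u_{xx}\ge -1$ yields a one-sided Lipschitz bound on $u_x$ in the $x$-variable, while periodicity locates a zero of $u_x(\cdot,y,t)$ within distance $1$ of any given point, and the two facts combine to give $|u_x|\le 1$.

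First I would record the Lipschitz-type inequality. Integrating $u_{xx}\ge -1$ on $[a,b]$ with $a<b$ (and $(y,t)$ fixed) gives
\[
u_x(b,y,t)-u_x(a,y,t)\ge -(b-a),
\]
or equivalently $u_x(a,y,t)\le u_x(b,y,t)+(b-a)$.

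Next I would exploit periodicity. Because $u$ is $1$-periodic in $x$, on each fiber $(y,t)=\mathrm{const}$ the function $u_x(\cdot,y,t)$ is $1$-periodic with zero mean over a period. By continuity such a function must vanish inside every closed interval of length $1$: otherwise it would have constant sign on some $[a,a+1]$ and hence nonzero integral, a contradiction. Consequently, for any $(x,y,t)\in T^3$ there exist $x_-\in[x-1,x]$ and $x_+\in[x,x+1]$ with $u_x(x_-,y,t)=u_x(x_+,y,t)=0$.

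Finally, applying the Lipschitz inequality with $(a,b)=(x,x_+)$ yields $u_x(x,y,t)\le x_+-x\le 1$, while with $(a,b)=(x_-,x)$ it yields $u_x(x,y,t)\ge -(x-x_-)\ge -1$, hence $|u_x|\le 1$. The argument is essentially immediate once Proposition~\ref{pro:1} is available; the only point deserving care is to use the one-sided bound $u_{xx}\ge -1$ in both directions together with periodicity in order to control both $\sup u_x$ and $\inf u_x$, since no upper bound on $u_{xx}$ is available a priori.
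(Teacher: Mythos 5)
Your proof is correct and follows essentially the same route as the paper: use $u_{xx}\ge -1$ from Proposition~\ref{pro:1} together with periodicity in $x$ to locate a zero of $u_x$ within distance $1$, then integrate the one-sided bound from that zero. The only cosmetic difference is that the paper obtains the zero as a critical point of the periodic function $s\mapsto u(x+s,y,t)$ (existence via an extremum), while you use the zero-mean argument; both are the same observation.
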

\begin{proof}
 Fix
$(x,y,t)\in\mathbb R^3$,
 and consider the periodic function
\begin{equation*}
v(s)=u(x+s,y,t).
\end{equation*}
 We have
\begin{equation*}
v''(s)=u_{xx}(x+s,y,t)\ge -1.
\end{equation*} Let $s_0\in[0,1]$ be a critical point of $v$. Then we have
\begin{equation*}
v'(s)=\int_{s_0}^s v''(r)\,r\begin{cases}\ge -(s-s_0)\ge -1,& s_0\le s\le s_0+1,\\
\le -(s-s_0)\le 1,& s_0-1\le s\le s_0.\end{cases}
\end{equation*}
By periodicity  we get that these estimates hold everywhere, in particular we obtain
\begin{equation*} \abs{u_x(x,y,t)}= \abs{v'(0)}\le 1.
\qedhere
\end{equation*}
\end{proof}

Denote by
\begin{equation*}
\nabla u=\begin{bmatrix}u_x \\ u_y \\ u_t\end{bmatrix}
\end{equation*}
the standard gradient of $u$. We have
\begin{equation*}
\abs{\nabla u}^2=u_x^2+u_y^2+u_t^2
\end{equation*}
thus, if we  set
\begin{equation*}
\abs{\nabla u}_{C^0}=\bigabs{\abs{\nabla u}}_{C^0},
\end{equation*}
 we have
\begin{equation*}
\abs{u}_{C^1}\le \abs{\nabla u}_{C^0}\le \sqrt 3\, \abs{u}_{C^1}.
\end{equation*}

In this paper all $L^p$ norms are taken on the torus $T^3$. In particular  we set
\begin{equation*}
\norm{\nabla u}_{L^2}^2=\int_{T^3}\abs{\nabla u}^2\,dV=\int_{T^3}(u_x^2+u_y^2+u_t^2)\,dV.
\end{equation*}

\begin{theorem} Given a real number $p\ge 2$,   we have
\begin{equation}
\label{eqn:11}
\bignorm{\nabla  \abs{u}^{p/2}}_{L^2}^2\le  \frac {p^2}{16}\, \norm{u}_{L^p}^p+
\frac {5p^3}{16}\,\abs{1+\mathrm e^F}_{C^0}\norm{u}_{L^p}^{p-1},
\end{equation}
for all $u\in \tilde C^2(T^3)$ satisfying equation~\eqref{eqn:8}.
\end{theorem}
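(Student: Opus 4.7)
The plan is to multiply equation~\eqref{eqn:8} by $u\abs{u}^{p-2}$ and integrate over $T^3$. Since
\[
\bignorm{\nabla \abs{u}^{p/2}}_{L^2}^2 = \frac{p^2}{4}\int_{T^3}\abs{u}^{p-2}\abs{\nabla u}^2\,dV,
\]
after rescaling, the target reduces to establishing
\[
\int_{T^3}\abs{u}^{p-2}\abs{\nabla u}^2\,dV \le \tfrac{1}{4}\norm{u}_{L^p}^p + \tfrac{5p}{4}\abs{1+\mathrm e^F}_{C^0}\norm{u}_{L^p}^{p-1}.
\]
Rewriting~\eqref{eqn:8} as $\Delta u + u_t = \mathrm e^F - 1 - u_{xx}(u_{yy}+u_{tt}+u_t) + u_{xy}^2 + u_{xt}^2$ and using $\int u\abs{u}^{p-2}\Delta u\,dV = -(p-1)\int \abs{u}^{p-2}\abs{\nabla u}^2\,dV$ (integration by parts with periodicity) together with $\int u\abs{u}^{p-2}u_t\,dV = 0$ yields the identity
\[
(p-1)\int_{T^3}\abs{u}^{p-2}\abs{\nabla u}^2\,dV = \int u\abs{u}^{p-2}(1-\mathrm e^F)\,dV + T_1 + T_2 + T_3,
\]
where $T_1 = \int u\abs{u}^{p-2}(u_{xx}u_{yy}-u_{xy}^2)\,dV$, $T_2$ is the analogue with $t$ replacing $y$, and $T_3 = \int u\abs{u}^{p-2}u_{xx}u_t\,dV$.

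The first integral on the right is bounded by $\abs{1+\mathrm e^F}_{C^0}\norm{u}_{L^p}^{p-1}$ by H\"older (using $\abs{T^3}=1$ and $\abs{1-\mathrm e^F}\le 1+\mathrm e^F$, valid since $\mathrm e^F>0$). Two integrations by parts telescope $T_3$ into $-\tfrac{p-1}{2}\int\abs{u}^{p-2}u_x^2\,u_t\,dV$; the bound $\abs{u_x}\le 1$ from Proposition~\ref{pro:10} together with $\abs{u_xu_t}\le \tfrac{1}{2}(u_x^2+u_t^2)$ gives $\abs{T_3}\le \tfrac{p-1}{4}\int\abs{u}^{p-2}\abs{\nabla u}^2\,dV$, which can be absorbed into the left hand side.

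The core of the argument is the estimation of the Monge-Amp\`ere-type integrals $T_1$ and $T_2$. Using the divergence identity $u_{xx}u_{yy}-u_{xy}^2 = \tfrac{1}{2}\mathrm{div}\bigl(\mathrm{cof}(D^2_{xy}u)\,\nabla_{xy}u\bigr)$ and two further integrations by parts against $u\abs{u}^{p-2}$, the sum $T_1+T_2$ reduces to a linear combination of the terms $\int\abs{u}^{p-2}u_{xx}(u_y^2+u_t^2)\,dV$ and $\int\abs{u}^{p-3}\mathrm{sgn}(u)u_x^2(u_y^2+u_t^2)\,dV$. The first summand is controlled above via the bound $u_{xx}\ge -1$ from Proposition~\ref{pro:1}, while the second is controlled using $u_x^2\le 1$; both reductions introduce only the lower-derivative quantities $\int\abs{u}^{p-2}(u_y^2+u_t^2)\,dV$ and $\int\abs{u}^{p-3}(u_y^2+u_t^2)\,dV$.

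The main obstacle is the final interpolation. The bounds above leave on the right side an excess of $\int\abs{u}^{p-2}(u_y^2+u_t^2)\,dV$ beyond what the $(p-1)\int\abs{u}^{p-2}\abs{\nabla u}^2\,dV$ on the left can simply absorb, and (for $p>2$) a singular-looking $\int\abs{u}^{p-3}(u_y^2+u_t^2)\,dV$ term. These must be interpolated against $\int\abs{u}^{p-2}\abs{\nabla u}^2\,dV$ on the left and $\norm{u}_{L^p}^p$ on the right by carefully tuned applications of Young's inequality; the $p$-dependent choice of interpolation parameters, together with the factors of $p-1$ inherited from each integration by parts, is what produces the sharp coefficients $\tfrac{p^2}{16}$ and $\tfrac{5p^3}{16}$ in the stated inequality.
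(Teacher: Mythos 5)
Your approach is genuinely different from the paper's and, as written, has a gap that goes beyond a missing computation. The paper does not multiply the scalar PDE by $u\abs u^{p-2}$; it works with the $1$-form $\alpha=d^{\mathrm c}u-ue^1$ and integrates the exact $4$-form $d\bigl(u\abs u^{p-2}\alpha\wedge(\Omega+\tilde\Omega)\bigr)$. The Stokes identity \eqref{eqn:13}, together with the explicit formulas \eqref{eqn:14}--\eqref{eqn:15} for $du\wedge\alpha\wedge\Omega$ and $du\wedge\alpha\wedge\tilde\Omega$, then packages the Monge--Amp\`ere nonlinearity into exactly the quadratic form appearing in the ellipticity estimate \eqref{eqn:168}; the pointwise positivity of that form (plus the remainder $-\tfrac18u^2(u_{xx}+1)\Omega^2$, which is contributed precisely by the $-ue^1$ piece of $\alpha$) is the structural input that produces the clean bound \eqref{eqn:16} and, after the IBP $\int\abs u^pu_{xx}\,dV=-p\int\abs u^{p-2}uu_x^2\,dV$, the constants $\tfrac{p^2}{16}$ and $\tfrac{5p^3}{16}$. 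So those constants are not the outcome of a delicate Young-inequality tuning; they come from the factor $\tfrac18$, the factor $\tfrac{p^2}{4}$ in $\bignorm{\nabla\abs u^{p/2}}_{L^2}^2=\tfrac{p^2}{4}\int\abs u^{p-2}\abs{\nabla u}^2\,dV$, and the elementary bound $\tfrac p4+\tfrac2{p-1}\abs{1-\mathrm e^F}_{C^0}\le\tfrac{5p}4\abs{1+\mathrm e^F}_{C^0}$.

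Your scalar reduction is fine up to and including the identity
\begin{equation*}
(p-1)\int_{T^3}\abs u^{p-2}\abs{\nabla u}^2\,dV=\int_{T^3}u\abs u^{p-2}(1-\mathrm e^F)\,dV+T_1+T_2+T_3,
\end{equation*}
and your treatment of $T_3$ is correct. The problem is $T_1+T_2$. Carrying out the two integrations by parts you propose gives, after collecting terms,
\begin{equation*}
T_1+T_2=-\frac{3(p-1)}2\int_{T^3}\abs u^{p-2}u_x^2\bigl(u_{yy}+u_{tt}\bigr)\,dV-\frac{(p-1)(p-2)}2\int_{T^3}\abs u^{p-3}\operatorname{sgn}(u)\,u_x^2\bigl(u_y^2+u_t^2\bigr)\,dV.
\end{equation*}
Two things now go wrong. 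First, the only pointwise control available is $u_{yy}+u_{tt}\ge -1-u_t$ (from Proposition~\ref{pro:1}); using it together with $u_x^2\le1$ and $\abs{u_t}\le\tfrac12(1+u_t^2)$ still leaves a contribution of size $\tfrac{3(p-1)}4\int\abs u^{p-2}\abs{\nabla u}^2\,dV$, which together with the $\tfrac{p-1}4\int\abs u^{p-2}\abs{\nabla u}^2\,dV$ coming from $T_3$ exhausts the entire left-hand side coefficient $p-1$: there is nothing left to bound by $\norm u_{L^p}^p$. Second, even if that were absorbable, you are left with lower-order quantities such as $\norm u_{L^{p-2}}^{p-2}$ and $\int\abs u^{p-3}(u_y^2+u_t^2)\,dV$, which are not of the form $\norm u_{L^p}^p$ or $\norm u_{L^p}^{p-1}$; interpolating them against $\norm u_{L^p}$ would require an a priori bound on $\norm u_{L^p}$, which is exactly what the theorem is trying to establish (and is used iteratively in Theorem~\ref{thm:7}). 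In short, the ``carefully tuned Young's inequality'' you defer to does not exist for the terms your decomposition produces; the ingredient you are missing is the quadratic-form positivity \eqref{eqn:168}, which the paper's $4$-form identity is engineered to expose.
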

\begin{proof}
From Theorem~\ref{thm:1} we have that
\begin{equation}\label{eqn:70}
\alpha=d^cu-ue^1
\end{equation}
solves equation~\eqref{eqn:103}, which can be re-written as
\begin{equation*}
(\mathrm e^F-1)\,\Omega^2= d\alpha\wedge (\Omega+ \tilde \Omega),
\end{equation*}
where
\begin{equation*}
\tilde \Omega=\Omega+ d\alpha.
\end{equation*}
Since
\begin{align*}
 d\bigl(u\abs{u}^{p-2}\bigr)&=\abs{u}^{p-2}\, du+u\,(p-2)\abs{u}^{p-3}\frac u{\abs u}\, du
 \\&=(p-1)\abs{u}^{p-2}\, du,\qquad \textup{for $u\ne 0$},
\end{align*}
we have
\begin{multline}
\label{eqn:12}
\int_{T^{3}} d\Bigl(\bigl(u\abs{u}^{p-2}\alpha\bigr)\wedge (\Omega+\tilde \Omega)\Bigr)
=
\\=(p-1)\int_{T^{3}}\abs{u}^{p-2}\, du\wedge \alpha\wedge (\Omega+\tilde \Omega)+
\int_{T^{3}}\abs{u}^{p-2} u (\mathrm e^F-1)\,\Omega^2
\end{multline}
and Stokes' theorem implies
\begin{equation}
\label{eqn:13}
\int_{T^{3}}\abs{u}^{p-2}\, du\wedge \alpha\wedge (\Omega+\tilde \Omega)=
\frac {1}{p-1}\int_{T^{3}}(1-\mathrm e^F)\abs{u}^{p-2} u \,\Omega^2.
\end{equation}
Taking into account that
\begin{equation}
 \label{eqn:71}
\begin{aligned}[t]\tilde \Omega =&(u_{yy}+u_{tt}+u_t+1)e^{13}-(u_{xx}+1)e^{24},
\\&+u_{xy}(e^{23}-e^{14})+u_{xt}(e^{12}-e^{34}),\end{aligned}
\end{equation}
we have
\begin{equation}
\label{eqn:14}
 d u\wedge \alpha \wedge \Omega=\frac 12\,\Bigl(u_x^2+u_y^2+u_t(u_t+u)\Bigr)\,\Omega^2,
\end{equation}
and
\begin{equation}
\label{eqn:15}
\begin{aligned}[t] d u\wedge\alpha\wedge \tilde \Omega =&\frac 12\,
\biggl(u_y^2+\Bigl(u_t+\frac 12\,u\Bigr)^2\biggr)(u_{xx}+1)\Omega^2
\\&+\frac 12\,
u_x^2(u_{yy}+u_{tt}+u_t+1)\Omega^2
\\
&-\biggl(u_xu_yu_{xy}+u_x\Bigl(u_t+\frac 12\,u\Bigr)u_{xt}\biggr)\Omega^2
\\&-\frac 18\,u^2(u_{xx}+1)\Omega^2.
\end{aligned}
\end{equation}
Thanks to
\eqref{eqn:168}, we obtain from  \eqref{eqn:15} that
\begin{equation*}
d u\wedge\alpha\wedge \tilde \Omega
\ge-\frac 18\, u^2(u_{xx}+1)\Omega^2.
\end{equation*}

 Then from \eqref{eqn:13} and \eqref{eqn:14} we get
\begin{multline}
\label{eqn:16}
\int_{T^{3}}\abs{u}^{p-2}
\Bigl(u_x^2+u_y^2+u_t(u_t+u)\Bigr)\, dV
\le
\\
\le \frac 14\, \int_{T^{3}} \abs{u}^p(u_{xx}+1)\, dV+
\frac {2}{p-1}\int_{T^{3}} (1-\mathrm e^F)\abs{u}^{p-2} u\, dV  .
\end{multline}
An integration by parts gives
\begin{equation*}
\int_{T^{3}}\abs{u}^{p-2}uu_t\, dV
=(1-p)\int_{T^{3}}\abs{u}^{p-2}uu_t\, dV,
\end{equation*}
therefore we have
\begin{equation*}
\int_{T^{3}}\abs{u}^{p-2}uu_t\, dV=0.
\end{equation*}
Since, moreover
\begin{equation*}
\int_{T^{3}} \abs{u}^pu_{xx}\, dV  =-p\int_{T^{3}} \abs{u}^{p-2}uu_x^2\, dV  ,
\end{equation*}
estimates \eqref{eqn:10} and  \eqref{eqn:16} imply
\begin{multline}
\label{eqn:17}
\int_{T^{3}} \abs{u}^{p-2}\abs{\nabla  u}^2\, dV  \le
\frac 14\int_{T^{3}} \abs{u}^p\, dV  +
\\+\Bigl(\frac {p}4+\frac {2}{p-1}\,
\abs{1-\mathrm e^F}_{C^0}\Bigr)\int_{T^{3}} \abs{u}^{p-1}\, dV.
\end{multline}
But the left-hand side can be rewritten as
\begin{equation*}
\int_{T^{3}} \abs{u}^{p-2}\abs{\nabla  u}^2\, dV
 =\frac 4{p^2}\int_{T^{3}} \bigabs{\nabla \abs{u}^{p/2}}^2\, dV.
\end{equation*}
Moreover
\begin{equation*}
\frac {p}4+\frac {2}{p-1}\,
\abs{1-\mathrm e^F}_{C^0}\le \frac {5p}4\,\abs{1+\mathrm e^F}_{C^0},\qquad \textup{for $p\ge 2$},
\end{equation*}
then \eqref{eqn:17} becomes
\begin{equation}
\label{eqn:18}
\int_{T^{3}} \bigabs{\nabla \abs{u}^{p/2}}^2\, dV \le
 \frac {p^2}{16}\, \int_{T^{3}} \abs{u}^p\, dV+
\frac {5p^3}{16}\abs{1+\mathrm e^F}_{C^0}\int_{T^{3}} \abs{u}^{p-1}\, dV.
\end{equation}
Since $T^3$ has measure $1$,  we have
\begin{equation}
\label{eqn:19}
\norm{u}_{L^{p-1}}\le \norm{u}_{L^p}.
\end{equation}
Estimate  \eqref{eqn:11} follows from \eqref{eqn:18} and \eqref{eqn:19}.
\end{proof}
It is rather natural to compare estimate \eqref{eqn:11} with the  classical a priori Yau's estimate
\begin{equation*}
\bignorm{\nabla  \abs{\varphi}^{p/2}}_{L^2}^2\le
\frac {mp^2}{4p-1}\,\left(\abs{1-\mathrm e^F}_{C^0}\right)\norm{\varphi}_{L^p}^{p-1}
\end{equation*}
involving the solutions $\varphi$ to the complex Monge-Amp\`ere equation $(\omega+dd^c\varphi)^m={\rm e}^F\,\omega^m$
in $2m$-dimensional K\"ahler manifolds (see for instance \cite[Proposition 5.4.1]{J}). The right-end side of
\eqref{eqn:11} contains the extra term $\frac {p^2}{16} \norm{u}_{L^p}^p$ due to the presence of $-ue^1$
in \eqref{eqn:72}. This is a problem in the first step of $C^0$-estimate, that is with $p=2$. We take care of this in
the next  proposition.

\medskip
From Strong Maximum Principle $\Delta u$ constant implies $u$ constant, then $-\Delta$  is an operator from $\tilde C^2(T^3)$ into $\tilde C^0(T^3)$. As such its first eigenvalue is $4\pi^2$. This implies the
inequality
\begin{equation}
\label{eqn:20}
4\pi^2\norm{u}_{L^2}^2\le \int_{T^3}-\Delta u\, u\,dV=
\norm{\nabla  u}_{L^2}^2, \qquad \textup{for all $u\in \tilde C^2(T^3)$}.
\end{equation}

\begin{proposition}\label{pro:4}
We have
\begin{equation}\label{eqn:52}
\norm{u}_{L^2}\le
\abs{1+\mathrm e^F}_{C^0},
\end{equation}
for all $u\in \tilde C^2(T^3)$ satisfying equation~\eqref{eqn:8}.
\end{proposition}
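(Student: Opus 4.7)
The plan is to combine estimate~\eqref{eqn:11}, specialized at $p=2$, with the spectral gap inequality~\eqref{eqn:20}. Setting $p=2$ in~\eqref{eqn:11} yields
\begin{equation*}
\bignorm{\nabla\abs{u}}_{L^2}^2\le\tfrac14\norm{u}_{L^2}^2+\tfrac52\,\abs{1+\mathrm e^F}_{C^0}\norm{u}_{L^2},
\end{equation*}
and this is the only input from the PDE~\eqref{eqn:8} that one needs.

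Since $u\in\tilde C^2(T^3)$ has mean zero, \eqref{eqn:20} applied to $u$ itself gives $4\pi^2\norm{u}_{L^2}^2\le\norm{\nabla u}_{L^2}^2$. I would then identify the two $L^2$-gradient norms: as $u$ is at least $C^1$, Stampacchia's lemma gives $\nabla u=0$ almost everywhere on $\{u=0\}$, whereas on $\{u\ne0\}$ the chain rule yields $\nabla\abs{u}=(\operatorname{sgn}u)\nabla u$. Hence $\bigabs{\nabla\abs{u}}=\bigabs{\nabla u}$ almost everywhere, so $\bignorm{\nabla\abs{u}}_{L^2}=\norm{\nabla u}_{L^2}$.

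Chaining the two inequalities produces
\begin{equation*}
\bigl(4\pi^2-\tfrac14\bigr)\norm{u}_{L^2}^2\le\tfrac52\,\abs{1+\mathrm e^F}_{C^0}\norm{u}_{L^2},
\end{equation*}
so, assuming $u\not\equiv 0$ (otherwise~\eqref{eqn:52} is trivial), dividing by $\norm{u}_{L^2}$ gives
\begin{equation*}
\norm{u}_{L^2}\le\frac{10}{16\pi^2-1}\,\abs{1+\mathrm e^F}_{C^0}.
\end{equation*}
Because $16\pi^2-1>10$, the numerical constant is strictly less than $1$ and~\eqref{eqn:52} follows with considerable slack.

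There is no real obstacle here: the whole point is that~\eqref{eqn:20} absorbs the problematic quadratic term $\tfrac14\norm{u}_{L^2}^2$ (the one coming from the extra summand $-ue^1$ in~\eqref{eqn:72}) into the gradient term on the left, reducing~\eqref{eqn:11} at $p=2$ to a linear inequality in $\norm{u}_{L^2}$. The only minor technicality is the identification $\bignorm{\nabla\abs{u}}_{L^2}=\norm{\nabla u}_{L^2}$, which is a routine application of Stampacchia's lemma.
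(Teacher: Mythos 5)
Your argument is exactly the paper's proof: specialize~\eqref{eqn:11} to $p=2$, use $\bignorm{\nabla\abs{u}}_{L^2}=\norm{\nabla u}_{L^2}$, invoke the Poincar\'e inequality~\eqref{eqn:20} to absorb the $\tfrac14\norm{u}_{L^2}^2$ term, and divide by $\norm{u}_{L^2}$. The only difference is that you spell out the Stampacchia justification for the gradient identity and compute the sharper constant $10/(16\pi^2-1)$ explicitly, neither of which changes the substance.
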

\begin{proof}
Since
\begin{equation*}
\bignorm{\nabla \abs{u}}^2_{L^2}=\norm{\nabla  u}^2_{L^2},
\end{equation*}
from \eqref{eqn:11} with $p=2$ and \eqref{eqn:20} we obtain
\begin{equation*}
4\pi^2 \norm{u}_{L^2}^2\le\frac {1}{4}\norm{u}_{L^2}^2+\frac 52\,\abs{1+\mathrm e^F}_{C^0}
\norm{u}_{L^2},
\end{equation*}
which implies \eqref{eqn:52}.
\end{proof}

Now we are ready to prove an  a priori $C^0$ estimate for the solutions to \eqref{eqn:8}:
\begin{theorem}\label{thm:7}
Given  $F\in C^2(T^3)$ satisfying condition \eqref{eqn:29}, there exists a positive constant $C_0$, depending only on
$\abs{F}_{C^0}$  such that
\begin{equation}
\label{eqn:21}
\abs{u}_{C^0}\le C_0,
\end{equation}
for all $u\in \tilde C^2(T^3)$ satisfying equation~\eqref{eqn:8}.
\end{theorem}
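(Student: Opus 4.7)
The plan is to run a Moser iteration on the $L^p$ estimate~\eqref{eqn:11}, bootstrapped by the Sobolev embedding $W^{1,2}(T^3)\hookrightarrow L^6(T^3)$ of the three-torus, with the $L^2$ bound from Proposition~\ref{pro:4} as the starting input. Set $A=\bigabs{1+\mathrm e^F}_{C^0}$, which is controlled by $\abs{F}_{C^0}$ and satisfies $A\ge 1$. For any $v\in W^{1,2}(T^3)$ the three-dimensional Sobolev inequality gives a universal $C_S$ with $\norm{v}_{L^6}^2\le C_S\bigl(\norm{v}_{L^2}^2+\norm{\nabla v}_{L^2}^2\bigr)$. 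Applying this with $v=\abs{u}^{p/2}$ and plugging in~\eqref{eqn:11}, I would obtain for every $p\ge 2$
\begin{equation*}
\norm{u}_{L^{3p}}^p\le C_S\Bigl(1+\tfrac{p^2}{16}\Bigr)\norm{u}_{L^p}^p+\tfrac{5p^3C_S}{16}\,A\,\norm{u}_{L^p}^{p-1}.
\end{equation*}
Using $A\ge 1$ and $\norm{u}_{L^p}^{p-1}\le 1+\norm{u}_{L^p}^p$, this collapses to a clean one-step recursion
\begin{equation*}
M(3p)\le \bigl(C\,p^3\,A\bigr)^{1/p}M(p),\qquad M(q):=\max\bigl(1,\norm{u}_{L^q}\bigr),
\end{equation*}
for some universal constant $C>0$.

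Next I would iterate along the geometric sequence $p_k=2\cdot 3^k$, producing
\begin{equation*}
M(p_k)\le \Biggl(\prod_{j=0}^{k-1}\bigl(C\,p_j^3\,A\bigr)^{1/p_j}\Biggr)M(p_0).
\end{equation*}
Since $\sum_{j\ge 0}1/p_j=3/4$ and $\sum_{j\ge 0}(\log p_j)/p_j$ converges, the infinite product is bounded by a constant depending only on $A$. Passing to the limit $k\to\infty$, with $\norm{u}_{L^{p_k}}\to\abs{u}_{C^0}$ and $M(p_0)=M(2)\le 1+\norm{u}_{L^2}\le 1+A$ by Proposition~\ref{pro:4}, yields $\abs{u}_{C^0}\le C_0$ with $C_0$ a function of $A$ alone, hence of $\abs{F}_{C^0}$.

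The main obstacle is the term $\frac{p^2}{16}\norm{u}_{L^p}^p$ on the right-hand side of~\eqref{eqn:11}, which the authors already flag as coming from the extra piece $-ue^1$ in~\eqref{eqn:72}: this term is not absorbable into the gradient $\bignorm{\nabla\abs{u}^{p/2}}_{L^2}^2$ via Poincaré for large $p$, so a naive iteration at the same exponent would fail. The Sobolev embedding bypasses this by upgrading the left-hand side to the strictly larger exponent $L^{3p}$; the bad coefficient $1+p^2/16$ is then controlled by the product convergence rather than by an inequality. The only delicate verification is the convergence of $\prod(Cp_k^3A)^{1/p_k}$, which reduces to the elementary geometric sums above and is why the scheme must be set up with a strictly geometric ratio in $k$.
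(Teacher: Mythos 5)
Your proposal is correct and follows essentially the same route as the paper: Sobolev embedding $W^{1,2}(T^3)\hookrightarrow L^6(T^3)$ bootstrapping~\eqref{eqn:11} into a Moser iteration along $p_k=2\cdot 3^k$, seeded by the $L^2$ bound of Proposition~\ref{pro:4}. The only (minor) difference is bookkeeping: to tame $\norm{u}_{L^p}^{p-1}$ the paper divides by $\norm{u}_{L^2}$ (using $\norm{u}_{L^2}\le\norm{u}_{L^p}$) and compensates at the end with~\eqref{eqn:52}, whereas you use $\norm{u}_{L^p}^{p-1}\le 1+\norm{u}_{L^p}^p$ and iterate the quantity $\max(1,\norm{u}_{L^q})$; your variant has the small advantage of sidestepping the degenerate case $\norm{u}_{L^2}=0$, but both are the same argument in substance.
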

\begin{proof}
From Sobolev Imbedding Theorem (see  for instance \cite[Theorem 5.4]{A}), there exists a positive
constant $K$ such that
\begin{equation}
\label{eqn:22}
\norm{w}_{L^6}^2\le K\Bigl(\norm{w}_{L^2}^2+\norm{\nabla  w}_{L^2}^2\Bigr),
\end{equation}
for all $w$ in the Sobolev space $W^{1,2}(T^3)$.

Then from \eqref{eqn:11} and \eqref{eqn:22} we have
\begin{equation}
\label{eqn:23}
\norm{u}_{L^{3p}}^p\begin{aligned}[t]
&\le K\Bigl(1+\frac {p^2}{16}\Bigr) \norm{u}_{L^p}^p+
K\frac {5p^3}{16}\,\abs{1+\mathrm e^F}_{C^0}\norm{u}_{L^p}^{p-1}
\\&\le K\,p^3\norm{u}_{L^p}^p\Bigl(1+\abs{1+\mathrm e^F}_{C^0}\norm{u}_{L^2}^{-1}\Bigr),
\qquad \textup{for all  $p\ge 2$.}
\end{aligned}
\end{equation}
It follows that
\begin{equation*}
\frac{\norm{u}_{L^{3p_k}}}{\norm{u}_{L^{p_k}}}\le
(Mp_k^3)^{1/p_k},\qquad \textup{for all $k\in \mathbb Z_+$},
\end{equation*}
with
\begin{equation}
\label{eqn:24}
M=K\,\Bigl(1+\abs{1+\mathrm e^F}_{C^0}\norm{u}_{L^2}^{-1}\Bigr)
\end{equation}
and
\begin{equation*}
p_k=2\cdot 3^k.
\end{equation*}
Then
\begin{equation*}
\frac{\norm{u}_{L^{3p_n}}}{\norm{u}_{L^2}}\le \prod_{k=0}^n(Mp_k^3)^{1/p_k},\qquad \textup{for all $n\in \mathbb Z_+$}.
\end{equation*}
But
\begin{equation*}
\prod_{k=0}^\infty (Mp_k^3)^{1/p_k}=\exp\biggl(\sum_{k=0}^\infty\frac 1{2\cdot 3^k}\Bigl(\log (8M)+3k\log 3\Bigr)\biggr)=
(8M)^{3/4}3^{3\mu/2},
\end{equation*}
with
\begin{equation*}
\mu=\sum_{k=1}^\infty \frac k{3^k}<\infty.
\end{equation*}
Then
\begin{equation}
\label{eqn:25}
\abs{u}_{C^0}=\sup_{n\in\mathbb N}\norm{u}_{L^{p_n}}\le (8M)^{3/4}3^{3\mu/2}\norm{u}_{L^2}.
\end{equation}
Now from \eqref{eqn:24} and \eqref{eqn:52} we have
\begin{align*}
M^{3/4}\norm{u}_{L^2}&=K^{3/4}\, \Bigl(\norm{u}_{L^2}+
\abs{1+\mathrm e^F}_{C^0}\Bigr)^{3/4}\norm{u}_{L^2}^{1/4}
\\&\le
(2K)^{3/4}\,
\abs{1+\mathrm e^F}_{C^0},
\end{align*}
and \eqref{eqn:21} follows from
 \eqref{eqn:25}.
\end{proof}

\subsection{Estimate of gradient and Laplacian.}
\rule{0pt}{0pt}

We make use of the tensor product notation.
In particular $(\nabla \otimes \nabla )u$ is the Hessian matrix of $u$, and $\tr (\nabla \otimes\nabla)=\Delta$ is the
Laplacian.

Observe that
\begin{equation*}
(\nabla \otimes\nabla)(uv)=v\,(\nabla \otimes\nabla)u
+u\,(\nabla \otimes\nabla)v+(\nabla u\otimes\nabla v)+(\nabla v\otimes\nabla u).
\end{equation*}

\begin{theorem}\label{thm:2}
Given  $F\in C^2(T^3)$ satisfying condition \eqref{eqn:29}, there exists a positive constant $C_1$, depending only on
$\norm{F}_{C^2}$, such that
\begin{equation}\label{eqn:50} \abs{\Delta u}_{C^0} \le
C_1\bigl(1+\abs{u}_{C^1}\bigr),
\end{equation}
for all  $u\in \tilde C^4(T^3)$ solution to equation~\eqref{eqn:8}.
\end{theorem}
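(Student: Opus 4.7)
The plan is to adapt Yau's classical a priori second-order estimate for the Monge-Amp\`ere equation to the fully non-linear operator on the left-hand side of \eqref{eqn:8}, paying careful attention to the extra first-order term $u_t$. Let $L$ denote the linearization of \eqref{eqn:8} at $u$,
\begin{equation*}
Lv = (u_{yy}+u_{tt}+u_t+1)v_{xx}+(u_{xx}+1)(v_{yy}+v_{tt})-2u_{xy}v_{xy}-2u_{xt}v_{xt}+(u_{xx}+1)v_t,
\end{equation*}
which is uniformly elliptic on solutions by Proposition~\ref{pro:5}. Taking the logarithm of \eqref{eqn:8} and differentiating in $\partial_k$ gives $Lu_k=\mathrm e^FF_k$ for $k=1,2,3$; differentiating once more and summing over $k$ then yields a Bochner-type identity
\begin{equation*}
L(\Delta u)=\mathrm e^F\bigl(\Delta F+\abs{\nabla F}^2\bigr)-2\sum_k\bigl[u_{xxk}(u_{yyk}+u_{ttk})-u_{xyk}^2-u_{xtk}^2\bigr]-2\sum_k u_{xxk}u_{tk}.
\end{equation*}
A short computation using \eqref{eqn:8} itself also gives the convenient identity $Lu=2\mathrm e^F-\Delta u-u_t-2$.

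The estimate then follows by applying the maximum principle to an auxiliary function of the form $w=\log(K+\Delta u)-Au$, with constants $K,A>0$ chosen large enough depending on $\norm{F}_{C^2}$ and on the $C^0$-bound for $u$ provided by Theorem~\ref{thm:7}. At an interior maximum point $p$ of $w$ we have $Lw(p)\le 0$ and $\nabla w(p)=0$; the latter gives $\nabla(\Delta u)=A(K+\Delta u)\nabla u$, so the first derivatives of $\Delta u$ at $p$ are pointwise controlled by $(K+\Delta u)\abs{u}_{C^1}$. The ``Monge-Amp\`ere'' third-derivative quadratic form in the Bochner identity is then dominated via Cauchy-Schwarz against the ellipticity-weighted gradient term $(K+\Delta u)^{-2}F^{ij}(\Delta u)_i(\Delta u)_j$ that appears in $L\log(K+\Delta u)$, while the drift-induced cross terms $u_{xxk}u_{tk}$ are absorbed through $2\abs{u_{xxk}u_{tk}}\le\varepsilon u_{xxk}^2+\varepsilon^{-1}u_{tk}^2$. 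Substituting the identity for $Lu$ and using the ellipticity of $L$ (Proposition~\ref{pro:5}), one extracts the upper bound $\Delta u(p)\le C_1(1+\abs{u}_{C^1})$; the maximality of $w$ at $p$ together with the $C^0$-bound on $u$ propagates this estimate to all of $T^3$, and the matching lower bound $\Delta u\ge 2\mathrm e^{F/2}-u_t-2$ from Proposition~\ref{pro:5} completes the proof of \eqref{eqn:50}.

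The principal difficulty is the drift term $(u_{xx}+1)\partial_t$ in $L$ coming from the first-order $u_t$ contribution in \eqref{eqn:8}. Differentiating the equation twice produces the mixed third-and-second-derivative cross terms $u_{xxk}u_{tk}$ in the Bochner identity, which are absent in the standard Monge-Amp\`ere setting; their absorption through elementary Cauchy-Schwarz is precisely what forces the right-hand side of \eqref{eqn:50} to depend on $\abs{u}_{C^1}$ rather than on $\norm{F}_{C^2}$ alone. This in turn explains why an independent a priori gradient estimate is needed in order to upgrade \eqref{eqn:50} to a genuine $C^0$-bound on $\Delta u$.
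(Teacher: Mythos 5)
Your general strategy—auxiliary function, Bochner-type identity, maximum principle, and the identity $Lu = 2\mathrm e^F - \Delta u - u_t - 2$—is the right skeleton, and your Bochner identity and the formula for $Lu$ are both correct. But there are two genuine gaps in the details.

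\textbf{(i) The choice of auxiliary function and constants.} Using $w=\log(K+\Delta u)-Au$ with $K,A$ ``depending only on $\norm F_{C^2}$ and the $C^0$-bound'' cannot produce the linear estimate \eqref{eqn:50}; at best it gives $\abs{\Delta u}_{C^0}\lesssim 1+\abs u_{C^1}^2$, which is useless for the bootstrap in Theorem~\ref{thm:5}. Trace the maximum-principle inequality: at the maximum of $w$ you obtain
\begin{equation*}
(\Delta u(p)+u_t(p)+2)\bigl(1-A\abs{\nabla u}^2\bigr)\le \frac{C_F}{A(K+\Delta u(p))}+2\mathrm e^{F(p)},
\end{equation*}
so for absorption you must take $A\lesssim 1/\abs{\nabla u}_{C^0}^2$, and then $C_F/A\gtrsim \abs{\nabla u}_{C^0}^2$ already forces a quadratic bound unless the denominator $K+\Delta u(p)$ is comparable to $1+\abs{\nabla u}_{C^0}$, which you have no control over. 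The paper's mechanism that fixes this is to take the exponent proportional to $1/\max(\Delta u+u_t+2)$ (equation~\eqref{eqn:47}), so that $\mu M\approx\epsilon$ by~\eqref{eqn:48}--\eqref{eqn:49}, and then to choose $\epsilon=1/(1+\abs{\nabla u}_{C^0})$. This self-referential scaling is what produces \emph{linear} dependence on $\abs u_{C^1}$; with constant $A$ independent of $\abs u_{C^1}$ you cannot reproduce it. Also note that you must put $\Delta u+u_t+2$, not $K+\Delta u$, inside the logarithm: Proposition~\ref{pro:5} gives positivity of the former for free (inequality~\eqref{eqn:36}), whereas $K+\Delta u>0$ again requires $K$ to depend on $\abs{u_t}\le\abs u_{C^1}$.

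\textbf{(ii) The absorption of the cross term.} The Cauchy--Schwarz step $2\abs{u_{xxk}u_{tk}}\le\varepsilon u_{xxk}^2+\varepsilon^{-1}u_{tk}^2$ does not accomplish anything: $\sum_k u_{tk}^2=\abs{\nabla u_t}^2$ is a \emph{second}-order quantity, not controlled by $\abs u_{C^1}$, and $\sum_k u_{xxk}^2$ is a third-order quantity not controlled at all. The paper handles these cross terms by a completely different device: at the maximum of $\Phi=(\Delta u+u_t+2)\mathrm e^{-\mu u}$, the first-order condition \eqref{eqn:32} controls $\nabla(\Delta u+u_t)$ — crucially including $\nabla u_t$ — and inequality \eqref{eqn:39} then bounds the mixed term $2\nabla u_{xx}\cdot\nabla(u_{yy}+u_{tt}+u_t)$ by $\mu^2(\Delta u+u_t+2)^2\abs{\nabla u}^2$, never separating out $\nabla u_t$. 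Your auxiliary function with $\Delta u$ alone gives a first-order condition controlling only $\nabla\Delta u$, so $\nabla u_t$ escapes.

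A cleaner alternative to the paper's pointwise Hessian computation (\eqref{eqn:140}--\eqref{eqn:33}), which you come close to but do not state, is to use the concavity of $(D^2u,u_t)\mapsto F=\tfrac12\log\det\tilde g$ (where $\tilde g$ is the matrix from the proof of Theorem~\ref{thm:8}, with $\det\tilde g=\mathrm e^{2F}$) to obtain directly $L(\Delta u+u_t)\ge \mathrm e^F(\Delta F+F_t)\ge -C_F$. Feeding that into the maximum-principle inequality for $\log(\Delta u+u_t+2)-Au$ with $A\sim1/(1+\abs u_{C^1}^2)$ does recover the linear bound. But as written, your proposal — constant $K,A$ independent of $\abs u_{C^1}$, $\Delta u$ inside the log, and the $\varepsilon$-Cauchy--Schwarz absorption of $u_{xxk}u_{tk}$ — does not close.
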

\begin{proof}
From equation~\eqref{eqn:8} we obtain
\begin{multline}\label{eqn:31}
\bigl(\Delta F+\abs{\nabla  F}^2+ F_t\bigr)\mathrm e^F= \\ =
\begin{aligned}[t]&(u_{yy}+u_{tt}+u_t+1)(\Delta u_{xx}+u_{xxt})
\\&+(u_{xx}+1)(\Delta u_{yy}+u_{yyt}+\Delta u_{tt}+u_{ttt})
\\
&+(u_{xx}+1)(\Delta u_t+u_{tt})+
2\nabla  u_{xx}\cdot \nabla  (u_{yy}+u_{tt}+u_t)
\\
&-2 u_{xy}(\Delta u_{xy}+u_{xyt})
-2\abs{\nabla  u_{xy}}^2-2 u_{xt}(\Delta u_{xt}+u_{xtt})-2\abs{\nabla  u_{xt}}^2.
\end{aligned}\end{multline}

Consider
\begin{equation}\label{eqn:30}
\Phi=(\Delta u+u_t+2)\mathrm e^{-\mu u},
\end{equation}
where
\begin{equation}\label{eqn:47}
\mu=\frac \epsilon{\max (\Delta u+u_t+2)}
\end{equation}
and $0<\epsilon< 1$ is a constant to be chosen later. Differentiating \eqref{eqn:30} yields
\begin{equation*}
\nabla  \Phi=\mathrm e^{-\mu u}\Bigl(\nabla  (\Delta u+u_t)-\mu (\Delta u+u_t+2)\nabla  u\Bigr),
\end{equation*}
and
\begin{equation*}\begin{aligned}[t]
(\nabla  \otimes \nabla ) \Phi=&
-\mu\mathrm e^{-\mu u}
\Bigl(\nabla  u\otimes\nabla (\Delta u+u_t)+\nabla (\Delta u+u_t)\otimes \nabla  u\Bigr)
\\&
+\mu^2\mathrm e^{-\mu u} \Bigl((\Delta u+u_t+2)\nabla  u\otimes\nabla  u\Bigr)+
\\
&+
\mathrm e^{-\mu u}\Bigl((\nabla \otimes\nabla )(\Delta u+u_t)-\mu(\Delta u+u_t+2)(\nabla  \otimes \nabla ) u\Bigr).\end{aligned}
\end{equation*}

Consider now a point $(x_0,y_0,t_0)$, where  $\Phi$ attains its maximum value.

We have $\nabla \Phi=0$ and  $(\nabla \otimes
\nabla )\Phi\le 0$, so that
\begin{equation}\label{eqn:32} \nabla  (\Delta u+u_t)=\mu (\Delta u+u_t+2)\nabla  u,
\end{equation}
and
\begin{equation}\label{eqn:140}
(\nabla  \otimes \nabla ) (\Delta u+u_t)\le \mu (\Delta u+u_t+2)\Bigl((\nabla  \otimes \nabla ) u
+\mu \nabla  u \otimes   \nabla  u\Bigr).
\end{equation}
In particular, we obtain
\begin{multline}\label{eqn:141}
\Bigl(\mu(\Delta u+u_t+2)(u_{xy}+\mu u_x u_y)-(\Delta u_{xy}+u_{xyt})\Bigr)^2\le \\
\le
\Bigl(\mu(\Delta u+u_t+2)(u_{xx}+\mu u_x^2)-(\Delta u_{xx}+u_{xxt})\Bigr)\cdot\\ \cdot
\Bigl(\mu(\Delta u+u_t+2)(u_{yy}+\mu u_y^2)-(\Delta u_{yy}+u_{yyt})\Bigr),
\end{multline}
and
\begin{multline}\label{eqn:142}
\Bigl(\mu(\Delta u+u_t+2)(u_{xt}+\mu u_x u_t)-(\Delta u_{xt}+u_{xtt})\Bigr)^2\le \\ \le
\Bigl(\mu(\Delta u+u_t+2)(u_{xx}+\mu u_x^2)-(\Delta u_{xx}+u_{xxt})\Bigr)\cdot \\ \cdot
\Bigl(\mu(\Delta u+u_t+2)(u_{tt}+\mu u_t^2)-(\Delta u_{tt}+u_{ttt})\Bigr).
\end{multline}

From  \eqref{eqn:140} we have in particular that
\begin{equation*} \mu (\Delta+u_t+2)(\partial_i\partial_ju+\mu \partial_i u\partial_j u)-(\Delta \partial_i\partial_ju+\partial_t\partial_i\partial_ju)\ge 0, \end{equation*}
for all $1\le i,j\le 3$. Then, form \eqref{eqn:141}, \eqref{eqn:142} and \eqref{eqn:168} with
\begin{equation*}
\begin{cases}
\xi=\Bigl(\mu (\Delta u+u_t+2)(u_{xx}+\mu u_x^2)-(\Delta u_{xx}+u_{xxt})\Bigr)^{1/2},\\
\eta=\Bigl(\mu (\Delta u+u_t+2)(u_{yy}+\mu u_y^2)-(\Delta u_{yy}+u_{yyt})\Bigr)^{1/2},\\
\tau=\Bigl(\mu (\Delta u+u_t+2)(u_{tt}+\mu u_t^2)-(\Delta u_{tt}+u_{ttt})\Bigr)^{1/2},
\end{cases}\end{equation*}
we obtain
\begin{multline}\label{eqn:33}
\begin{aligned}[t]&(u_{yy}+u_{tt}+u_t+1)(\Delta u_{xx}+u_{xxt})
\\&
+(u_{xx}+1)(\Delta u_{yy}+u_{yyt}+\Delta u_{tt}+u_{ttt})
\\
&-2 u_{xy}(\Delta u_{xy}+u_{xyt})
-2 u_{xt}(\Delta u_{xt}+u_{xtt})\le\end{aligned}
\\
\begin{aligned}[t]
\le&\mu(\Delta u+u_t+2)(u_{yy}+u_{tt}+u_t+1)(u_{xx}+\mu u_x^2)
\\&
+\mu(\Delta u+u_t+2) (u_{xx}+1)(u_{yy}+\mu u_y^2+u_{tt}+\mu u_t^2)
\\
&-2\,\mu(\Delta u+u_t+2)\Bigl( u_{xy}(u_{xy}+\mu u_x u_y)
+u_{xt}(u_{xt}+\mu u_x u_t)\Bigr).\end{aligned}
\end{multline}

 Substituting \eqref{eqn:32} and \eqref{eqn:33} into \eqref{eqn:31},
and using \eqref{eqn:36}, we  get
\begin{multline}\label{eqn:40}
\bigl(\Delta F+\abs{\nabla  F}^2+ F_t\bigr)\mathrm e^F\le \\
\le\begin{aligned}[t] &\mu(\Delta u+u_t+2)
(u_{yy}+u_{tt}+u_t+1)(u_{xx}+\mu u_x^2)
\\
&+\mu(\Delta u+u_t+2)(u_{xx}+1)\bigl(u_{yy}+u_{tt}+\mu(u_y^2+u_t^2)\bigr)
\\
&+\mu(\Delta u+u_t+2)(u_{xx}+1)u_t+
2\nabla  u_{xx}\cdot \nabla  (u_{yy}+u_{tt}+u_t)
\\
&-2 \mu(\Delta u+u_t+2)\Bigl(u_{xy}(u_{xy}+\mu u_x u_y)
+u_{xt}(u_{xt}+\mu u_x u_t)\Bigr).\end{aligned}
\end{multline}
On the other side, from \eqref{eqn:32} we have
\begin{multline}\label{eqn:39}
\mu^2(\Delta u+u_t+2)^2\abs{\nabla  u}^2= \abs{\nabla  (\Delta u+u_t)}^2=\\
\begin{aligned}[t]&=\abs{\nabla  u_{xx}}^2+\abs{\nabla  (u_{yy}+u_{tt}+u_t)}^2+
2\nabla  u_{xx}\cdot \nabla  (u_{yy}+u_{tt}+u_t) \\
&\ge 2\nabla  u_{xx}\cdot \nabla  (u_{yy}+u_{tt}+u_t).\end{aligned}
\end{multline}
Eventually from \eqref{eqn:40},  and \eqref{eqn:39} we obtain
\begin{multline}\label{eqn:41}
\bigl(\Delta F+\abs{\nabla  F}^2+ F_t\bigr)\mathrm e^F\le \\
\begin{aligned}[t]\le& \mu(\Delta u+u_t+2)
\Bigl((u_{yy}+u_{tt}+u_t+1)u_{xx}+(u_{xx}+1)(u_{yy}+u_{tt}+u_t)\Bigr)
\\
&-2\,\mu(\Delta u+u_t+2)(u_{xy}^2+u_{xt}^2)
\\
&+2\,\mu^2(\Delta u+u_t+2)\Bigl((u_{yy}+u_{tt}+u_t+1)u_x^2+(u_{xx}+1)(u_y^2+u_t^2)\Bigr)
\\
&+ \mu^2(\Delta u+u_t+2)^2\abs{\nabla  u}^2
\\
\le&
2\mu(\Delta u+u_t+2)\mathrm e^F-
\mu(\Delta u+u_t+2)^2+\mu^2(\Delta u+u_t+2)^2\abs{\nabla  u}^2.
\end{aligned}
\end{multline}

Set
\begin{equation*}
M=\Delta u(x_0,y_0,t_0)+u_t(x_0,y_0,t_0)+2
\end{equation*}
and
\begin{equation*}
u_0=u(x_0,y_0,t_0),
\end{equation*}
so that
\begin{equation*}
\max \Phi=M\mathrm e^{-\mu u_0}.
\end{equation*}
From \eqref{eqn:41} we get
\begin{equation}\label{eqn:45}
\mu M^2\le \bigabs{(\Delta F+F_t)\mathrm e^F}_{C^0}+2\mu M \abs{\mathrm e^F}_{C^0}+
\mu^2 M^2 \abs{\nabla  u}_{C^0}^2.
\end{equation}

Denote by $\tilde u$  the value of $u$ at a point where  $\Delta u+u_t+2$  attains its maximum value. Then, thanks to Theorem \ref{thm:7}, we have
\begin{equation}\label{eqn:46}
M\le\max (\Delta u+u_t+2)\le M \mathrm e^{\mu (\tilde u-u_0)}\le M \mathrm e^{2\mu C_0}.
\end{equation}
Moreover, \eqref{eqn:47} and \eqref{eqn:36} imply
\begin{equation*}
2\mu= \frac {2\epsilon}{\max (\Delta u+u_t+2)}\le \epsilon\,\mathrm e^{-\min F/2}\le \mathrm e^{-\min F/2},
\end{equation*}
then, \eqref{eqn:46} yields
\begin{equation}\label{eqn:48}
\epsilon\exp\Big(- \mathrm e^{-\min F/2}\, C_0\Bigr)\le\mu M\le \epsilon
\end{equation}
and
\begin{equation}\label{eqn:49}
\exp\Big(- \mathrm e^{-\min F/2}\, C_0\Bigr)\max (\Delta u+u_t+2)\le M.
\end{equation}
Eventually from   \eqref{eqn:45}, \eqref{eqn:48}, and \eqref{eqn:49} we obtain
\begin{multline*}
\epsilon\exp\Big(- 2\mathrm e^{-\min F/2}\, C_0\Bigr) \max(\Delta u+u_t+2)
\le \\ \le
\bigabs{(\Delta F+F_t)\mathrm e^F}_{C^0}+2 \epsilon\abs{\mathrm e^F}_{C^0}+
\epsilon^2\abs{\nabla  u}_{C^0}^2,
\end{multline*}
that is
\begin{multline}\label{eqn:51}
 \max(\Delta u+u_t+2)\le
\\
\le
\exp\Big(2\mathrm e^{-\min F/2}\, \abs{u}_{C^0}\Bigr)
\biggl(\frac 1\epsilon\,\bigabs{(\Delta F+F_t)\mathrm e^F}_{C^0}+2 \abs{\mathrm e^F}_{C^0}+
3 \epsilon\abs{\nabla  u}_{C^0}^2\biggr).
\end{multline}
Since
\begin{equation*}
\abs{\Delta u}_{C^0}\le  \max(\Delta u+u_t+2)+\abs{\nabla u}_{C^0}+2,
\end{equation*}
estimate \eqref{eqn:50} follows  from  \eqref{eqn:51}, with
\begin{equation*}
\epsilon=\frac 1{1+\abs{\nabla  u}_{C^0}}.\qedhere
\end{equation*}
\end{proof}

To prove next theorem, we need the following estimate.
\begin{proposition}\label{pro:12}
Given  $0<\mu<1$,  there exists a positive  $K_0$, depending only on $\mu$, such that
\begin{equation}\label{eqn:145}
\abs{u}_{C^{1+\mu}}\le K_0\Bigl(\norm{u}_{C^0}+\abs{\Delta u}_{C^0}\Bigr),
\qquad \textup{for all $u\in C^2(T^3)$}.
\end{equation}
\end{proposition}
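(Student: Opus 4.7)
The plan is to reduce the inequality to a statement about the Poisson equation on $T^3$ and then combine a classical $L^p$-Calder\'on-Zygmund estimate with a Sobolev embedding. This is a completely standard elliptic-regularity argument; the only choice to make is the interpolation exponent, which will be dictated by $\mu$.

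First I would decompose $u=v+c$, where $c=\int_{T^3}u\,dV$ and $v\in\tilde C^2(T^3)$. Constants are killed both by the seminorm $\abs{\cdot}_{C^{1+\mu}}$ and by the Laplacian, while $\norm{v}_{C^0}\le 2\norm{u}_{C^0}$ and $\abs{\Delta v}_{C^0}=\abs{\Delta u}_{C^0}$. Hence it is enough to prove the estimate for the zero-mean function $v$ in place of $u$.

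Next I would fix any exponent $p>3/(1-\mu)$ and invoke two classical facts. The first is the $L^p$ Calder\'on-Zygmund estimate on the torus: there exists $C_p>0$ such that
\begin{equation*}
\norm{v}_{W^{2,p}(T^3)}\le C_p\bigl(\norm{v}_{L^p(T^3)}+\norm{\Delta v}_{L^p(T^3)}\bigr)
\end{equation*}
for every mean-zero $v\in W^{2,p}(T^3)$. This follows from the $L^p$-boundedness of the Fourier multipliers $k_ik_j/\abs{k}^2$ on $T^3$, or equivalently from standard Calder\'on-Zygmund theory applied to the periodic Newtonian potential. The second is the Sobolev embedding $W^{2,p}(T^3)\hookrightarrow C^{1+\mu}(T^3)$, which holds precisely when $2-3/p>1+\mu$, with an embedding constant $K_1$ depending only on $\mu$ (since $p$ is now a fixed function of $\mu$). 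Chaining the two inequalities, together with the fact that $T^3$ has unit volume, yields
\begin{equation*}
\abs{v}_{C^{1+\mu}}\le \norm{v}_{C^{1+\mu}}\le K_1\norm{v}_{W^{2,p}}\le K_1C_p\bigl(\norm{v}_{C^0}+\abs{\Delta v}_{C^0}\bigr),
\end{equation*}
and reverting to $u$ gives the claim with $K_0=2K_1C_p$.

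I do not expect any genuine obstacle: both the $L^p$ Calder\'on-Zygmund estimate on $T^3$ and the Sobolev embedding are completely standard, and the reduction to zero-mean functions is harmless. The only care required is in pinning down the dependence of the constants: $p$ is chosen as a function of $\mu$, after which $C_p$ and $K_1$ are universal, yielding a $K_0$ that depends only on $\mu$, exactly as claimed.
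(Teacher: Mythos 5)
Your argument is correct and follows essentially the same route as the paper: combine the $L^p$ elliptic estimate for the Laplacian with the Morrey/Sobolev embedding $W^{2,p}(T^3)\hookrightarrow C^{1+\mu}(T^3)$ for $p$ chosen large enough in terms of $\mu$, then bound the $L^p$ norms by $C^0$ norms using the unit volume of $T^3$. The only differences are cosmetic: the paper takes $p=3/(1-\mu)$ exactly (which already gives the embedding into $C^{1,\mu}$), and omits the mean-zero reduction, since once the term $\norm{u}_{L^p}$ is kept on the right-hand side the Calder\'on--Zygmund estimate holds for all $u$, not just mean-zero ones.
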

\begin{proof}
Let $p=\frac 3{1-\mu}$. Since $p>3$, Morrey inequality gives
\begin{equation*}
\abs{u}_{C^{1+\mu}}\le C \norm{u}_{W^{2,p}},
\end{equation*}
where the constant $C$ depends only on $\mu$. On the other hand, 
elliptic $L^p$ estimates for the Laplacian give
\begin{equation*}
\norm{u}_{W^{2,p}}\le C'\bigl(\norm{u}_{L^p}+\norm{\Delta u}_{L^p}\bigr),
\end{equation*}
where again  $C'$ depends only on $\mu$.

Finally, if $u\in C^2(T^3)$ we have
\begin{equation*}
\norm{u}_{L^p}+\norm{\Delta u}_{L^p}\le \abs{u}_{C^0}+\abs{\Delta u}_{C^0}. \qedhere
\end{equation*}
\end{proof}
\begin{theorem}\label{thm:5}
Consider  $F\in C^2(T^3)$ satisfying condition \eqref{eqn:29}. Then there exists a positive constant $C_2$, depending only on
 $\norm{F}_{C^2}$, such that
\begin{equation}\label{eqn:165}
\abs{u}_{C^1}\le C_2,
\end{equation}
for all $u\in \tilde C^4(T^3)$ solution to equation~\eqref{eqn:8}.
\end{theorem}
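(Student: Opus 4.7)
The plan is to chain the already established a priori bounds and absorb a bad term via Hölder interpolation. Theorem~\ref{thm:7} gives $\abs{u}_{C^0}\le C_0$, while Theorem~\ref{thm:2} supplies $\abs{\Delta u}_{C^0}\le C_1(1+\abs{u}_{C^1})$, with $C_0,C_1$ depending only on $\norm{F}_{C^2}$. Fixing any $\mu\in(0,1)$ and plugging these into Proposition~\ref{pro:12} yields
\begin{equation*}
\abs{u}_{C^{1+\mu}}\le K_0\bigl(C_0+C_1(1+\abs{u}_{C^1})\bigr).
\end{equation*}

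The next step is to invoke the classical Hölder interpolation inequality on $T^3$,
\begin{equation*}
\abs{u}_{C^1}\le \delta\,\abs{u}_{C^{1+\mu}}+C_\delta\,\abs{u}_{C^0},\qquad \delta>0,
\end{equation*}
where $C_\delta$ depends only on $\delta$ and $\mu$. Substituting the two preceding estimates produces
\begin{equation*}
\abs{u}_{C^1}\le \delta K_0 C_1\,\abs{u}_{C^1}+\delta K_0(C_0+C_1)+C_\delta C_0,
\end{equation*}
and choosing $\delta$ so small that $\delta K_0 C_1\le 1/2$ lets one absorb the first term on the right, producing the desired bound $\abs{u}_{C^1}\le C_2$ with $C_2$ depending only on $\norm{F}_{C^2}$.

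There is no substantial obstacle: the hard analytic content is already contained in Theorems~\ref{thm:7} and~\ref{thm:2}, and the interpolation inequality is standard — it follows from Taylor's formula $\abs{\nabla u(x)\cdot h}\le 2\abs{u}_{C^0}+\frac{\abs{h}^{1+\mu}}{1+\mu}\abs{u}_{C^{1+\mu}}$ applied along $\nabla u(x)/\abs{\nabla u(x)}$ and then combined with Young's inequality. The only mild care needed is to formulate $\delta$ small enough so that the absorption is legitimate, which is straightforward since $K_0, C_0, C_1$ are already independent of $\abs{u}_{C^1}$.
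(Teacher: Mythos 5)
Your argument is correct and mirrors the paper's proof essentially line for line: both combine the $C^0$ bound of Theorem~\ref{thm:7}, the Laplacian bound of Theorem~\ref{thm:2}, and the elliptic estimate of Proposition~\ref{pro:12}, then absorb the linear $\abs{u}_{C^1}$ term on the right via the standard $C^1$--$C^{1+\mu}$ interpolation inequality with a sufficiently small interpolation parameter. The only cosmetic difference is that the paper cites Gilbarg--Trudinger, section~6.8, for the interpolation inequality while you sketch a direct derivation from Taylor's formula; the substance of the argument is the same.
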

\begin{proof}
Let $0<\mu<1$.
Thanks  to standard interpolation theory (see \cite[section 6.8]{GT}), for all  $\epsilon>0$ there exists a
positive constant $M_\epsilon$ such that
\begin{equation*}
\abs{u}_{C^1}\le M_\epsilon \abs{u}_{C^0}+\epsilon\abs{u}_{C^{1+\mu}},\qquad\textup{for all $u\in C^{1+\mu}(T^3)$}.
\end{equation*}
Then, thanks to Theorem \ref{thm:7} and Proposition \ref{pro:12}, we have
\begin{equation*}
\begin{aligned}[t] \abs{u}_{C^1}&\le M_\epsilon C_0+\epsilon K_0\Bigl(C_0+\abs{u}_{C^1}+\abs{\Delta u}_{C^0}\Bigr)
\\ &\le
M_\epsilon C_0+\epsilon K_0\Bigl(C_0+\abs{u}_{C^1}+C_1(1+\abs{u}_{C^1})\Bigr)
\\ &=
M_\epsilon C_0+\epsilon K_0(C_0+C_1)+\epsilon K_0(1+C_1)\abs{u}_{C^1},
\end{aligned}\end{equation*}
which implies  \eqref{eqn:165}, if we choose
\begin{equation*}
\epsilon<\frac 1{K_0(1+C_1)}. \qedhere
\end{equation*}
\end{proof}
\begin{corollary}\label{cor:10}
Under the hypotheses of Theorem~\textup{\ref{thm:5}},
we have that equation~\eqref{eqn:8} is  uniformly elliptic on the set $\mathcal S$ of all solutions $u\in \tilde C^4(T^3)$, in the sense that
\begin{equation*}
\inf_{u\in\mathcal S} \Lambda(u)>0,
\end{equation*}
where $\Lambda$ is defined in \eqref{eqn:61}.
\end{corollary}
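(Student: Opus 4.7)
The plan is to combine the a priori estimates already at our disposal with the algebraic structure of the eigenvalues of the matrix $P(u)$ appearing in Proposition \ref{pro:5}. Recall that $\Lambda(u)=\lambda_-$ and that $\lambda_\pm$ are the two roots of $\lambda^2-(\Delta u+u_t+2)\lambda+\mathrm e^F$. In particular one has
\begin{equation*}
\lambda_+\lambda_-=\mathrm e^F,\qquad \lambda_++\lambda_-=\Delta u+u_t+2,
\end{equation*}
so that $\Lambda(u)=\mathrm e^F/\lambda_+(u)$. This identity converts the task of bounding $\Lambda(u)$ from below into that of bounding $\lambda_+(u)$ from above.

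First I would observe that, by \eqref{eqn:36}, both eigenvalues $\lambda_\pm$ are non-negative, hence
\begin{equation*}
0\le \lambda_+(u)\le \lambda_+(u)+\lambda_-(u)=\Delta u+u_t+2,
\end{equation*}
which gives the pointwise inequality
\begin{equation*}
\Lambda(u)\;\ge\;\frac{\mathrm e^F}{\Delta u+u_t+2}\;\ge\;\frac{\mathrm e^{\min F}}{\Delta u+u_t+2}.
\end{equation*}
Next I would invoke Theorem \ref{thm:5} to obtain the uniform bound $\abs{u}_{C^1}\le C_2$ for every $u\in\mathcal S$, and then feed this into Theorem \ref{thm:2} to get
\begin{equation*}
\abs{\Delta u}_{C^0}\le C_1(1+\abs{u}_{C^1})\le C_1(1+C_2).
\end{equation*}
Consequently
\begin{equation*}
\Delta u+u_t+2\;\le\;C_1(1+C_2)+C_2+2,
\end{equation*}
uniformly over $\mathcal S$, and therefore
\begin{equation*}
\inf_{u\in\mathcal S}\Lambda(u)\;\ge\;\frac{\mathrm e^{\min F}}{C_1(1+C_2)+C_2+2}\;>\;0,
\end{equation*}
since $F$ is a fixed $C^2$ datum on $T^3$.

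There is no genuine obstacle here: the work has already been done in establishing the $C^0$, $C^1$ and Laplacian estimates in the preceding subsection, and the present corollary is essentially an algebraic repackaging of those bounds using the elementary identity $\lambda_+\lambda_-=\mathrm e^F$ for the roots of the characteristic polynomial. The only point to double-check is that $\lambda_+>0$ strictly (so that one can divide by it), which is guaranteed by the strict positivity in \eqref{eqn:36}: since $\lambda_+\ge \frac12(\Delta u+u_t+2)\ge \mathrm e^{F/2}>0$.
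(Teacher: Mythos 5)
Your proof is correct and is essentially the argument the paper intends: the paper's proof is just a one-line citation of Proposition~\ref{pro:5} and Theorems~\ref{thm:7} and~\ref{thm:5}, and you have correctly unpacked it via the identity $\Lambda(u)=\lambda_-=\mathrm e^F/\lambda_+$ and the bound $\lambda_+\le\Delta u+u_t+2$. Note that you (rightly) also invoke Theorem~\ref{thm:2} to bound $\abs{\Delta u}_{C^0}$, which the paper's citation list omits even though it is needed; the omission in the paper appears to be a minor oversight, since the bound on $\Delta u+u_t+2$ cannot be obtained from the $C^0$ and $C^1$ estimates alone.
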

\begin{proof}
It follows from Proposition~\ref{pro:5} and Theorems~\ref{thm:7} and~\ref{thm:5}.
\end{proof}

\subsection{$C^{2+\rho}$-estimate}
\rule{0pt}{0pt}

We begin by recalling a theorem of \cite{TWWY}, which greatly simplifies the estimate of derivatives up to second order. In
\cite{TWWY} the theorem has been stated locally, but on compact manifolds it holds globally.

\begin{theorem}[\protect{\cite[Theorem 5.1]{TWWY}}]\label{thm:11}
Let $\tilde \Omega$ be be the solution of the Calabi-Yau equation
\begin{equation*}
\tilde \Omega^n=\mathrm e^F\Omega^n, \qquad  [\tilde \Omega]=[\Omega],
\end{equation*}
on a compact almost-K\"ahler manifold $(M^{2n},\Omega,J)$.

Assume there are two constants $\tilde C_0>0$ and $0<\rho_0<1$ such that $F\in C^{\rho_0}(M^{2n})$ and
\begin{equation*}
\tr \tilde g\le \tilde C_0,
\end{equation*}
where $\tilde g$ is the Riemannian metric associated to $\tilde \Omega$.

Then there exist two constants $\tilde C>0$ and $0<\rho<1$, depending only on $M^{2n}$, $\Omega$, $J$, $C_0$ and
$\norm{F}_{C^{\rho_0}}$, such that
\begin{equation*}
\norm{\tilde g}_{C^\rho}\le \tilde C.
\end{equation*}
\end{theorem}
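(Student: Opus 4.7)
The plan is to deduce the $C^\rho$ estimate for $\tilde g$ from an interior $C^{2+\rho}$ estimate for a local potential of $\tilde\omega-\Omega$, via an Evans--Krylov-type argument for concave uniformly elliptic PDEs. I would organize the argument in three steps.

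First, uniform ellipticity: the hypothesis $\tr\tilde g\le\tilde C_0$ bounds the eigenvalues of $\tilde g$ with respect to $g$ from above, and combined with the determinant identity $\det\tilde g=\mathrm e^{2F}\det g$ (whose right-hand side is uniformly bounded away from $0$ and $\infty$ since $F\in C^{\rho_0}\subset C^0$) it forces a positive lower bound on the smallest eigenvalue of $\tilde g$ relative to $g$. Hence the linearization of the Calabi--Yau equation at any such solution is uniformly elliptic, with ellipticity constants depending only on $\tilde C_0$ and $\norm{F}_{C^0}$.

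Second, the local reformulation: on a small chart $U\subset M^{2n}$ equipped with a local $J$-unitary coframe, the equation $\tilde\omega^n=\mathrm e^F\Omega^n$ can be recast as a Monge--Amp\`ere-type PDE
\begin{equation*}
\log\det\bigl(g_{i\bar j}+u_{i\bar j}+T_{i\bar j}(u,Du)\bigr)=F+\mathrm{const}
\end{equation*}
for a local potential $u$, where the tensor $T$ collects the lower-order contributions coming from the torsion of $J$ (and vanishes in the K\"ahler case). The principal part $\log\det$ is concave on positive-definite Hermitian matrices, so the classical Evans--Krylov theorem yields an interior $C^{2+\rho}$ bound for $u$, and hence a $C^\rho$ bound for the entries of $\tilde g$ in the local frame. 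A finite covering by such charts together with a partition of unity promotes the local estimates to the desired global $C^\rho$ bound.

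The principal obstacle is the presence of the gradient-dependent perturbation $T$: Evans--Krylov applies cleanly only to the concave principal symbol, whereas $T$ is neither concave nor of higher order. The standard remedy, which is the technical heart of \cite{TWWY}, is a bootstrap. One first applies Krylov--Safonov's weak Harnack inequality to the linearized equation, using the uniform ellipticity from step one and the $C^0$ bound on $u$ furnished by the local potential construction, to obtain a $C^{1+\rho_1}$ bound on $u$ for some $\rho_1>0$. This turns $T_{i\bar j}(u,Du)$ into a $C^{\rho_1}$ source term, after which the genuine Evans--Krylov argument applied to $\log\det$ yields $C^{2+\rho}$ regularity for $u$ with $\rho$ and the constant depending only on the data $M^{2n}$, $\Omega$, $J$, $\tilde C_0$ and $\norm{F}_{C^{\rho_0}}$. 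Tracking how these two estimates feed into one another (and in particular how the H\"older exponent $\rho_0$ of $F$ propagates) is the delicate step.
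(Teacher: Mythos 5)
The paper does not prove Theorem~\ref{thm:11}; it is quoted directly from \cite[Theorem 5.1]{TWWY} (with the remark that the local statement there globalizes on a compact manifold), so there is no internal proof to compare your sketch against. Judged on its own, your first step is fine: the trace bound $\tr\tilde g\le\tilde C_0$ together with the determinant identity $\det\tilde g=\mathrm e^{2F}\det g$ does give two-sided control of the eigenvalues of $\tilde g$ relative to $g$, hence uniform ellipticity depending only on $\tilde C_0$ and $\norm F_{C^0}$.

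Your second step, however, has a genuine gap. On a general almost-K\"ahler manifold with non-integrable $J$ there is no local $\partial\bar\partial$-lemma, so one cannot write $\tilde g_{i\bar j}=g_{i\bar j}+u_{i\bar j}+T_{i\bar j}(u,Du)$ for a scalar potential $u$, even on a small chart: the operator $dd^{\mathrm c}u$ has a $(2,0)+(0,2)$ component governed by the Nijenhuis tensor, and forcing it to vanish is itself a constraint on $u$. The unknown in the equation $\tilde\Omega^n=\mathrm e^F\Omega^n$ with $[\tilde\Omega]=[\Omega]$ is locally a closed $J$-invariant $(1,1)$-form $d\alpha$ for a $1$-form $\alpha$, not the $dd^{\mathrm c}$ of a function. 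Hence the scalar Monge--Amp\`ere reformulation on which your Evans--Krylov plus Krylov--Safonov bootstrap rests is unavailable at the level of generality of Theorem~\ref{thm:11}. (In the specific Kodaira--Thurston situation of this paper a scalar $u$ does exist, by Theorem~\ref{thm:1}, but the resulting equation~\eqref{eqn:8} is a degenerate $2\times 2$-type Monge--Amp\`ere operator in three real variables, which is again not the concave $\log\det$ operator your plan invokes.) This obstruction is exactly what \cite{TWWY} is designed to circumvent: their estimate is formulated and proved directly for the metric $\tilde g$, with no local potential, via a compactness/blow-up contradiction argument in which $J$ flattens to the standard complex structure under rescaling and a Liouville theorem for the complex Monge--Amp\`ere equation on $\mathbb{C}^n$ rules out any loss of H\"older continuity. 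So your proposal both elides the key technical difficulty of the almost-complex setting and departs from the actual strategy of the cited proof.
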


Using this  Theorem we easily prove the following estimate.
\begin{theorem}\label{thm:8}
Given  $F\in C^2(T^3)$ satisfying condition \eqref{eqn:29}, there exist  constants $C_3>0$ and $\rho>0$, both depending
only on $\norm{F}_{C^2}$, such that
\begin{equation}\label{eqn:68}
\norm{u}_{C^{2+\rho}}\le C_3,
\end{equation}
for all $u\in \tilde C^4(T^3)$ solution to  equation~\eqref{eqn:8}.
\end{theorem}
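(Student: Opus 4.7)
The plan is to apply Theorem~\ref{thm:11} to the solution $\tilde\Omega=\Omega+d\alpha$ of the Calabi--Yau equation in order to get Hölder control on the metric $\tilde g$, and then to extract a full $C^{2+\rho}$ bound on $u$ by differentiating~\eqref{eqn:8} once and invoking linear Schauder theory.

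First I would verify the hypothesis of Theorem~\ref{thm:11}, namely that $\tr_g\tilde g$ is bounded. Since $d\alpha$ is of type $(1,1)$, the metric compatible with $\tilde\Omega$ and $J$ is $\tilde g(X,Y)=\tilde\Omega(X,JY)$, and a direct computation in the $g$-orthonormal frame $\{e_1,\dots,e_4\}$, using~\eqref{eqn:71} together with $Je_1=e_3$ and $Je_4=e_2$, gives
\begin{equation*}
\tr_g\tilde g \;=\; 2(u_{xx}+1) + 2(u_{yy}+u_{tt}+u_t+1) \;=\; 2(\Delta u + u_t + 2).
\end{equation*}
By Theorems~\ref{thm:7},~\ref{thm:5} and~\ref{thm:2} this quantity is bounded in $C^0$ by a constant depending only on $\norm{F}_{C^2}$. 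Theorem~\ref{thm:11}, applied with any $\rho_0\in(0,1)$, then produces $\rho\in(0,1)$ and a bound on $\norm{\tilde g}_{C^\rho}$ depending only on $\norm{F}_{C^2}$. Reading off the entries of $\tilde g$ in the frame $\{e_i\}$, this is equivalent to a uniform $C^\rho$ bound on the four functions $u_{xx}+1$, $u_{yy}+u_{tt}+u_t+1$, $u_{xy}$ and $u_{xt}$.

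The remaining second derivatives $u_{yy}$, $u_{tt}$ and $u_{yt}$ do not appear individually in~\eqref{eqn:8} or in $\tilde\Omega$, so I would recover them by differentiating the equation. Applying $\partial_y$ to~\eqref{eqn:8} and setting $v=u_y$, one gets the linear PDE
\begin{equation*}
(u_{yy}+u_{tt}+u_t+1)\,v_{xx} + (u_{xx}+1)(v_{yy}+v_{tt}+v_t) - 2u_{xy}v_{xy} - 2u_{xt}v_{xt} = \mathrm e^F F_y.
\end{equation*}
The principal symbol of this operator is exactly the left-hand side of~\eqref{eqn:168} and is therefore uniformly elliptic by Corollary~\ref{cor:10}; its coefficients lie in $C^\rho$ by the previous step, and $\mathrm e^F F_y\in C^1\subset C^\rho$ because $F\in C^2$. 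Since $u\in\tilde C^4$ the function $v$ is a classical $C^3$ solution, so the standard Schauder estimate on $T^3$ gives $\norm{u_y}_{C^{2+\rho}}$ bounded by a constant depending only on $\norm{F}_{C^2}$. The same argument applied to $\partial_x$ and $\partial_t$ of~\eqref{eqn:8} produces the analogous bounds on $u_x$ and $u_t$; combining them yields $\norm{u}_{C^{3+\rho}}\le C_3$, which in particular proves~\eqref{eqn:68}.

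The main technical point is verifying the hypothesis of Theorem~\ref{thm:11}, which, through the identity $\tr_g\tilde g=2(\Delta u+u_t+2)$, reduces to the previously established bound on $\Delta u+u_t$. Once that is done, the passage from the partial $C^\rho$ control of $D^2u$ to the full $C^{2+\rho}$ estimate is routine linear Schauder theory, made possible by the observation that differentiating~\eqref{eqn:8} once produces a linear elliptic equation whose coefficients are exactly the four quantities already controlled by $\norm{\tilde g}_{C^\rho}$.
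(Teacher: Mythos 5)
Your argument is correct, and the first half (computing $\tr \tilde g = 2(\Delta u + u_t + 2)$, invoking Theorems~\ref{thm:7} and~\ref{thm:5} to bound it, and then applying Theorem~\ref{thm:11} to obtain the $C^\rho$ bound~\eqref{eqn:86} on $u_{xx}+1$, $u_{yy}+u_{tt}+u_t+1$, $u_{xy}$, $u_{xt}$) coincides exactly with the paper. Where you diverge is in the passage from~\eqref{eqn:86} to the full $C^{2+\rho}$ bound. The paper does not differentiate~\eqref{eqn:8}: it observes instead that the algebraic identity
\begin{equation*}
Pu_{xx}+Q(u_{yy}+u_{tt})-2Ru_{xy}-2Su_{xt}+Qu_t \;=\; 2\mathrm e^F-(\Delta u+u_t+2),
\end{equation*}
with $P=u_{yy}+u_{tt}+u_t+1$, $Q=u_{xx}+1$, $R=u_{xy}$, $S=u_{xt}$, exhibits $u$ itself as a solution of a linear uniformly elliptic second-order equation whose coefficients and right-hand side are already controlled in $C^\rho$ by~\eqref{eqn:86}; a single application of Schauder theory then gives~\eqref{eqn:68}. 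You instead apply $\partial_y$ (resp.\ $\partial_x$, $\partial_t$) to~\eqref{eqn:8} and treat $v=u_y$ (resp.\ $u_x$, $u_t$) as a solution of the resulting linear elliptic equation, whose principal symbol is the one in~\eqref{eqn:168} and whose coefficients are again the four quantities in~\eqref{eqn:86}; this is legitimate because $F\in C^2$ makes $\mathrm e^F F_y\in C^\rho$, and because $u\in\tilde C^4$ makes $v$ a classical solution. Your route costs one extra differentiation of the equation and one extra use of the regularity hypothesis on $F$, but in exchange yields a $C^{3+\rho}$ bound on $u$, strictly more than the $C^{2+\rho}$ bound the theorem asks for; the paper's route is more economical and gives exactly the claimed estimate. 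Either way the logic is sound and the dependence of $C_3$ and $\rho$ only on $\norm{F}_{C^2}$ is preserved.
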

\begin{proof}
From \eqref{eqn:71} we obtain that the Riemannian metric $\tilde g$ is represented by the matrix
\begin{equation*} \tilde
g=\begin{bmatrix}
u_{yy}+u_{tt}+u_t+1 & u_{xy} & 0 & u_{xt} \\
u_{xy} & u_{xx}+1 & u_{xt} & 0 \\
0 & u_{xt} & u_{yy}+u_{tt}+u_t+1 & -u_{xy} \\
u_{xt} & 0 & -u_{xy} & u_{xx}+1
\end{bmatrix}. \end{equation*}
Then
\begin{equation*}
\tr \tilde g=2(\Delta u+u_t+2).
\end{equation*}
Thanks to Theorems \ref{thm:7} and \ref{thm:5}  we can apply Theorem~\ref{thm:11} and get that
\begin{equation}\label{eqn:86}
\max\bigl\{\norm{1+u_{xx}}_{C^\rho},\,\norm{1+u_{yy}+u_{tt}+u_t}_{C^\rho},
\,\norm{u_{xy}}_{C^\rho},\,\norm{u_{xt}}_{C^\rho}\bigr\}\le \tilde C,
\end{equation}
where $\tilde C$ depends only on $\norm{F}_{C^2}$.

Now the estimates of    second order derivatives can be obtained as follows. Given a solution $u$ of equation
\eqref{eqn:8}, we have that $u$ can be viewed as a solution to the linear PDE
\begin{equation}\label{eqn:73}
Pu_{xx}+Q(u_{yy}+u_{tt})-2Ru_{xy}-2Su_{xt}+Qu_t=f
\end{equation}
with
\begin{equation*}
P=u_{yy}+u_{tt}+u_t+1,\quad Q=u_{xx}+1,\quad R=u_{xy},\quad S=u_{xt},
\end{equation*}
and
\begin{equation*}
f=2\mathrm e^F-(\Delta u+u_t+2).
\end{equation*}
Thanks to Proposition~\ref{pro:5}, Corollary~\ref{cor:10} and estimate~\eqref{eqn:86}, standard Schauder theory gives the estimate
\eqref{eqn:68}.
\end{proof}

\section{Proof of Theorem~\ref{thm:12}}\label{sec:5}

\begin{proposition}\label{pro:11} Assume $u\in \tilde C^{2+\rho}(T^3)$ is a solution to equation
\eqref{eqn:8} with $\rho>0$. If $F\in C^\infty(T^3)$ then $u\in \tilde C^\infty(T^3)$.
\end{proposition}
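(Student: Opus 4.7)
The plan is a standard bootstrap: differentiate the equation, exhibit the derivative of $u$ as the solution of a linear elliptic equation whose coefficients are known to be in $C^\rho$, apply Schauder estimates, and iterate.

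First I would observe that, thanks to Proposition~\ref{pro:5} applied to our particular solution $u\in\tilde C^{2+\rho}(T^3)$, the function $\Lambda(u)$ defined in \eqref{eqn:61} is continuous and strictly positive on $T^3$, hence bounded below by a positive constant; so equation~\eqref{eqn:8} is uniformly elliptic at this specific $u$, with coefficients $P,Q,R,S$ (as in the proof of Theorem~\ref{thm:8}) lying in $C^\rho(T^3)$.

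Next, for a fixed $i\in\{1,2,3\}$, differentiating \eqref{eqn:8} in the direction $\partial_i$ (with the convention~\eqref{eqn:171}) yields
\begin{equation*}
P\,(\partial_i u)_{xx}+Q\bigl((\partial_i u)_{yy}+(\partial_i u)_{tt}\bigr)-2R\,(\partial_i u)_{xy}-2S\,(\partial_i u)_{xt}+Q\,(\partial_i u)_t=F_i\,\mathrm e^F,
\end{equation*}
where the coefficients $P,Q,R,S$ are exactly as in \eqref{eqn:73}. Thus $v=\partial_i u$ is a $C^{1+\rho}$ solution of a linear uniformly elliptic equation on $T^3$ with $C^\rho$ coefficients and $C^\infty$ right-hand side. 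Standard Schauder theory then gives $v\in C^{2+\rho}(T^3)$, hence $u\in C^{3+\rho}(T^3)$.

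With $u\in C^{3+\rho}$ in hand, the coefficients $P,Q,R,S$ of the linearized equation are now in $C^{1+\rho}$, and $F_i\mathrm e^F\in C^\infty$, so another application of Schauder regularity upgrades $v$ to $C^{3+\rho}$ and thus $u$ to $C^{4+\rho}$. Iterating the same argument, at each step we gain one full derivative of regularity, so that $u\in C^{k+\rho}(T^3)$ for every $k\ge 2$, i.e.\ $u\in C^\infty(T^3)$. The mean-zero condition is preserved along the bootstrap, giving $u\in\tilde C^\infty(T^3)$.

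The only subtle point, rather than a genuine obstacle, is verifying uniform ellipticity at the single fixed solution $u$ before the first bootstrap step; this is handled by the remark above using Proposition~\ref{pro:5} and the compactness of $T^3$. After that, everything reduces to a routine linear Schauder iteration.
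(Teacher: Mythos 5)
Your Schauder bootstrap is a genuinely different route from the paper's proof: there, the authors simply cite a Sobolev-regularity theorem for nonlinear elliptic equations (\cite[Theorem 4.8, Chapter~14]{T1}) to obtain $u\in W^{n,2}(T^3)$ for every $n$, and then conclude $u\in C^\infty(T^3)$ by Sobolev embedding. Your version makes the mechanism explicit, which is instructive; however, the first step as written contains a gap.

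You propose to differentiate equation~\eqref{eqn:8} in the direction $\partial_i$, but $u$ is only known to lie in $C^{2+\rho}(T^3)$, so $u_{xx}$, $u_{xy}$, $u_{xt}$ and $u_{yy}+u_{tt}$ are only in $C^\rho$ and cannot be differentiated classically. Consequently $v=\partial_i u$ is a priori only $C^{1+\rho}$, the second derivatives $v_{xx}$, $v_{xy}$, $v_{xt}$, $v_{yy}+v_{tt}$ appearing in your linear equation need not exist pointwise, and $v$ is not a classical solution of that equation, so classical Schauder regularity does not apply to it directly. The standard fix is to replace $\partial_i$ by the difference quotient $\delta_i^h u=h^{-1}\bigl(u(\cdot+h e_i)-u\bigr)$: applying the fundamental theorem of calculus to the quadratic nonlinearity in \eqref{eqn:8} shows that $\delta_i^h u$ is a genuine $C^{2+\rho}$ solution of a linear uniformly elliptic equation whose coefficients are averages of $P,Q,R,S$ evaluated along a segment (hence lie in $C^\rho$ with norm bounded uniformly in $h$) and whose right-hand side $\delta_i^h(\mathrm e^F)$ is bounded in $C^\rho$ uniformly in $h$ because $F$ is smooth. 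The Schauder estimate for that equation then gives $\norm{\delta_i^h u}_{C^{2+\rho}}\le C$ uniformly in small $h$, and letting $h\to 0$ yields $\partial_i u\in C^{2+\rho}(T^3)$, i.e.\ $u\in C^{3+\rho}(T^3)$. From there on $u$ is regular enough to be differentiated classically and the rest of your iteration goes through exactly as you wrote it.
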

\begin{proof}
From Proposition~\ref{pro:5} we have that equation
\eqref{eqn:8} is elliptic. Then from \cite[Theorem 4.8, Chapter~14]{T1},
 it follows that
$u$ belongs to the Sobolev space $W^{n,2}(T^3)$, for all $n\in\mathbb Z_+$. But this implies that $u\in C^\infty(T^3)$.
\end{proof}

Thanks to Theorem~\ref{thm:1}, Theorem~\ref{thm:12} is an immediate consequence of the following
\begin{theorem}
Let $F\in C^\infty(T^3)$ satisfy \eqref{eqn:29}. Then equation~\eqref{eqn:8} has a solution $u\in\tilde C^\infty(T^3)$.
\end{theorem}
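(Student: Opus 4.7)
The plan is to use the continuity method. Embed the given datum $F$ into a one-parameter family
\begin{equation*}
F_s = sF - \log\!\int_{T^3}\mathrm e^{sF}\,dV,\qquad s\in[0,1],
\end{equation*}
so that $F_0\equiv 0$, $F_1=F$ (using \eqref{eqn:29}), each $F_s$ is smooth and satisfies the normalization $\int_{T^3}\mathrm e^{F_s}\,dV=1$, and $s\mapsto F_s$ is smooth into $C^k(T^3)$ for every $k$. Fix a H\"older exponent $\rho>0$ as produced by Theorem~\ref{thm:8} and let
\begin{equation*}
I=\bigl\{s\in[0,1]\mid \text{equation \eqref{eqn:8} with $F_s$ in place of $F$ has a solution $u_s\in\tilde C^{2+\rho}(T^3)$}\bigr\}.
\end{equation*}
Since $u\equiv 0$ solves the equation when $s=0$, we have $0\in I$. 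It remains to show $I$ is both open and closed in $[0,1]$; combined with Proposition~\ref{pro:11}, this will yield a smooth solution for $s=1$.

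For \emph{openness}, I would apply the implicit function theorem to the map
\begin{equation*}
N\colon \tilde C^{2+\rho}(T^3)\to C^{\rho}(T^3),\qquad N(u)=(u_{xx}+1)(u_{yy}+u_{tt}+u_t+1)-u_{xy}^2-u_{xt}^2.
\end{equation*}
A routine integration by parts (using the identities $\int u_{xx}u_{yy}=\int u_{xy}^2$, $\int u_{xx}u_{tt}=\int u_{xt}^2$, $\int u_{xx}u_t=0$, $\int u_t=0$) shows that $\int_{T^3}N(u)\,dV=1$ for every $u\in\tilde C^{2+\rho}(T^3)$. Linearizing at a solution $u_s$ in direction $v$ yields
\begin{equation*}
L_{u_s}v=(u_{s,yy}+u_{s,tt}+u_{s,t}+1)v_{xx}+(u_{s,xx}+1)(v_{yy}+v_{tt}+v_t)-2u_{s,xy}v_{xy}-2u_{s,xt}v_{xt},
\end{equation*}
which is uniformly elliptic by Proposition~\ref{pro:5} and Corollary~\ref{cor:10} and has no zero-order term. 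By the strong maximum principle on the compact manifold $T^3$, $\ker L_{u_s}=\mathbb{R}\cdot 1$. Differentiating the identity $\int N(u)\,dV=1$ gives $\int L_{u_s}v\,dV=0$ for all $v$, so the constant function $1$ also spans $\ker L_{u_s}^{*}$. Standard Fredholm theory for second order elliptic operators on closed manifolds then implies that $L_{u_s}$ induces an isomorphism $\tilde C^{2+\rho}(T^3)\to\tilde C^{\rho}(T^3)$. Applying the implicit function theorem to the restriction of $N-\mathrm e^{F_s}$ to $\tilde C^{2+\rho}(T^3)$ (whose image lies in the affine subspace $\{g:\int g\,dV=0\}$ after subtraction of $1$) produces a $\tilde C^{2+\rho}$-solution for $s'$ near $s$; hence $I$ is open.

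For \emph{closedness}, let $s_n\in I$ with $s_n\to s_*\in[0,1]$ and pick solutions $u_n\in\tilde C^{2+\rho}(T^3)$ of equation~\eqref{eqn:8} with datum $F_{s_n}$. Since $\|F_{s_n}\|_{C^2}$ is bounded uniformly in $n$, the a priori estimate of Theorem~\ref{thm:8} (which in turn rests on Theorems~\ref{thm:7} and~\ref{thm:5}) gives $\|u_n\|_{C^{2+\rho}}\le C_3$. By Ascoli--Arzel\`a, pass to a subsequence converging in $\tilde C^{2+\rho'}(T^3)$ for any $0<\rho'<\rho$ to some $u_*\in\tilde C^{2+\rho'}(T^3)$ which solves equation~\eqref{eqn:8} with datum $F_{s_*}$. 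Proposition~\ref{pro:11} then upgrades $u_*$ to $\tilde C^{\infty}(T^3)$, so in particular $u_*\in\tilde C^{2+\rho}(T^3)$ and $s_*\in I$.

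The main obstacle I anticipate is the openness step, where one must verify that $L_{u_s}$ is genuinely an isomorphism between the spaces of zero-mean H\"older functions; the asymmetric first-order term $u_{s,xx}+1$ multiplying $v_t$ means $L_{u_s}$ is not self-adjoint, so the identification of the cokernel cannot be read off directly from the principal part and instead must be extracted from the compatibility condition $\int N(u)\,dV=1$ as above. Closedness and the bootstrap to smoothness are then essentially consequences of the estimates already proved in Sections~\ref{sec:1} and~\ref{sec:4}.
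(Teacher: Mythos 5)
Your proof is correct and takes essentially the same approach as the paper: the continuity method, with openness via the implicit function theorem and the a priori estimate of Theorem~\ref{thm:8} supplying closedness, followed by Proposition~\ref{pro:11} for regularity. The only differences are cosmetic: you use the path $F_s=sF-\log\int_{T^3}\mathrm e^{sF}\,dV$ while the paper takes $F_\tau=\log(1-\tau+\tau\mathrm e^F)$ (both smooth, normalized paths joining $0$ to $F$ with $\norm{F_s}_{C^2}$ uniformly bounded), and you identify the cokernel of the linearization directly via the integral identity $\int_{T^3}N(u)\,dV=1$ and Fredholm index zero, whereas the paper invokes Schauder theory together with the linear method of continuity (\cite[Theorem~5.2]{GT}) to get surjectivity; both are standard and valid.
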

\begin{proof}
We apply the continuity method (see \cite[Section 17.2]{GT}).
 For $0\le \tau\le 1$, let
\begin{equation}
\mathfrak S_\tau=\Bigl\{u\in \tilde C^\infty(T^3)\mid (u_{yy}+u_{tt}+u_t+1)(u_{xx}+1)-u_{xy}^2-u_{xt}^2=
\mathrm e^{F_\tau}\Bigr\}
\end{equation}
where
\begin{equation*} F_\tau=\log(1-\tau+\tau\, \mathrm e^F).
\end{equation*}
Note that $0\in \mathfrak S_0$ and that  $\mathfrak S_1$ consists in the solutions to \eqref{eqn:8} lying in $\tilde
C^{\infty}(T^3)$. Since
\begin{equation*}\max_{0\le \tau\le 1}\norm{F_\tau}_{C^2}<\infty,
\end{equation*}
 and
\begin{equation*}
\int_{T^3}\mathrm e^{F_\tau}\,  dV =
\int_{T^3}\bigl(1-\tau+\tau\,\mathrm e^{F}\bigr)\, dV=1,
\end{equation*} by Theorem~\ref{thm:8} there exists a
real number $\rho>0$ such that
\begin{equation}\label{eqn:67}
\sup_{u\in\mathfrak S}\norm{u}_{C^{2+\rho}}<\infty,
\end{equation}
with
\begin{equation*}
\mathfrak S=\bigcup_{0\le\tau\le 1}\mathfrak S_\tau\ne \emptyset.
\end{equation*}
Since $0\in \mathfrak S_0$, the set $\bigl\{\tau\in [0,1]\mid \mathfrak S_\tau\ne \emptyset\bigr\}$ is not empty and we
can define
\begin{equation*}
\mu=\sup\bigl\{\tau\in [0,1]\mid \mathfrak S_\tau\ne \emptyset\bigr\}.
\end{equation*}
In order to compete the proof we have to show that $\mathfrak S_\mu\ne \emptyset$ and $\mu=1$.

\begin{list}{$\bullet$}{\setlength{\labelwidth}{10pt}\setlength{\leftmargin}{15pt}} \item $\mathfrak S_\mu\ne \emptyset$.
By the definition of $\mu$ there exist two sequences $(\tau_k)\subset [0,1]$ and $(u_k)\subset\tilde C^\infty(T^3)$
such that $(\mu_k)$ is increasing and $u_k\in \mathfrak S_{\tau_k}$ for all $k$. Thanks to \eqref{eqn:67}, the sequence
$(u_k)$ is bounded in $\tilde C^\rho(T^3)$, then by Ascoli-Arzel\`a Theorem there exists a subsequence $(u_{k_j})$
convergent in $\tilde C^{2+\rho/2}(T^3)$. Let $v=\lim u_{k_j}$. Then $v$ belongs to $\tilde C^{2+\rho/2}(T^3)$ and
satisfies the equation
\begin{equation*}
(v_{yy}+v_{tt}+v_b+1)(v_{xx}+1)-v_{xy}^2-v_{xt}^2=\mathrm e^{F_\mu}.
\end{equation*}
By Proposition~\ref{pro:11} $v$ belongs to $\tilde C^\infty( T^3)$. In particular, $v$ belongs to $\mathfrak S_\mu$,
which turns out to be not empty.
\item $\mu=1$. Assume by contradiction $\mu<1$ and define the non-linear $C^\infty$ operator\,\footnote{\
$\int_{T^3}T(u,\tau)\,dV=0$ follows from
$\int_{T^3}(u_{xy}^2+u_{xt}^2)\,dV=\int_{T^3}(u_{yy}+u_{tt})u_{xx}\,dV$ \newline and $\int_{T^3}\mathrm e^{F_\tau}\,dV=1$.
}
\begin{equation*}
\begin{cases} T:\tilde C^\rho(T^3)\times [0,1]\to \tilde C^{\rho-2}(T^3), \\
T(u,\tau)=(u_{yy}+u_{tt}+u_t+1)(u_{xx}+1)-u_{xy}^2-u_{xt}^2-\mathrm e^{F_\tau}.
\end{cases}
\end{equation*}
Since $\mathfrak S_{\mu}$ is not empty, there exists $v\in \mathfrak S_\mu$ such that $T(v,\mu)=0$. Compute
\begin{equation*}
\partial_1 T(v,\mu)w=L w,
\end{equation*}
where
\begin{equation*}
Lw=Pw_{xx}+Q(w_{yy}+w_{tt})-2Rw_{xy}-2Sw_{xt}+Qw_t=f
\end{equation*}
with
\begin{equation*}
P=v_{yy}+v_{tt}+v_t+1,\quad Q=v_{xx}+1,\quad R=v_{xy},\quad S=v_{xt},
\end{equation*}
Since $v\in\mathfrak S_\mu$, we know that  $L:\tilde C^{2+\rho}(T^3) \to \tilde C^\rho(T^3)$ is elliptic. Then by
Strong Maximum Principle $L=0$ implies that $u$ is constant. This shows that $L$ is is one-to-one on $\tilde
C^{2+\rho}$. Moreover, by ellipticity, $L$ has closed range, thus Schauder Theory and Continuity Method (see
\cite[Theorem 5.2]{GT}) show that $L$ is onto. Therefore by Implicit Function Theorem there exists an $\epsilon>0$ such
that
\begin{equation*}
T(u,\tau)=0
\end{equation*}
is solvable with respect to $u$ for every $\tau\in\left (\mu-\epsilon,\mu+\epsilon\right )$. Thanks to Proposition
\ref{pro:11}, these solutions belong to $\tilde C^\infty(T^3)$. Then $\mathfrak S_\tau\ne \emptyset$ for all
$\mu<\tau<\mu+\epsilon$,  in contradiction with the definition of $\mu$. \qedhere
\end{list}
\end{proof}

\section{Outline of the proof of Theorem \ref{thm:10}}\label{sec:6}
Let $\theta$ as in the statement of Theorem \ref{thm:10}. Then we can write
\begin{equation*}
\omega_{\theta}=f^{13}-f^{24},
\end{equation*}
with
\begin{equation*}
f^1=\cos\theta\, e^1+\sin\theta\, e^2,\quad
f^2=-\sin\theta\, e^1+\cos\theta\, e^2,\quad f^3=e^3,\quad f^4=e^4.
\end{equation*}
Since
\begin{equation*}
df^4=de^4=e^{12}=f^{12},
\end{equation*}
one easily obtains that
\begin{equation*}
\alpha=d^{\mathrm c}u-uf^1
\end{equation*}
satisfies \eqref{eqn:88} and \eqref{eqn:103}
if and only if $u\in \tilde C^2(T^3)$ is a solution to the fully non-linear PDE
\begin{multline}\label{eqn:172}
\Bigl((\cos\theta\,\partial_x-\sin\theta\,\partial_y)^2 u+1\Bigr)
\Bigl((\sin\theta\,\partial_x+\cos\theta\,\partial_y)^2 u+\partial_t^2 u+\partial_t u+1\Bigr)-
\\-\Bigl((\cos\theta\,\partial_x-\sin\theta\,\partial_y)(\sin\theta\,\partial_x+\cos\theta\,\partial_y)u\Bigr)^2-
\\-\Bigl((\cos\theta\,\partial_x-\sin\theta\,\partial_y)\partial_t u\Bigr)^2={\mathrm e}^F.
\end{multline}

Let
\begin{equation*}
v(p,q,t)=u(x,y,t),
\end{equation*}
with
\begin{equation*}
\begin{cases} x=\cos\theta\, p+\sin\theta\, q,\\ y=-\sin\theta\,p+\cos\theta\, q.\end{cases}
\end{equation*}
Then
\begin{equation*}
\begin{cases}
\partial_p v=\cos\theta\,\partial_x u-\sin\theta\,\partial_y u, \\ \partial_q v=\sin\theta\,\partial_x u+\cos\theta\,\partial_y u.\end{cases}
\end{equation*}
This implies that \eqref{eqn:172} can be re-written as
\begin{equation}\label{eqn:173}
(v_{pp}+1)(v_{qq}+v_{tt}+v_t+1)-v_{pq}^2-v_{pt}^2={\mathrm e}^G.
\end{equation}
where
\begin{equation*}
G(p,q,t)=F(x,y,t).
\end{equation*}

Equation~\eqref{eqn:173} is formally the same as equation~\eqref{eqn:8}. There is however a big difference
in periodicity conditions, which become
\begin{equation*}
v(p+\cos\theta\, m+\sin\theta n,q+\sin\theta\, m+\cos\theta\, n,t+k)=v(p,q,t),
\end{equation*}
for all $m,n,k\in\mathbb Z$.

In particular, this implies that the proof of Proposition~\ref{pro:10} fails, unless $v$ is periodic  with respect to the first variable $p$. An elementary argument shows that this happens if and only if either $\cos\theta=0$ or $\tan\theta\in\mathbb Q$, that is if and only if there exist two integers $m$ and $n$ such that
\begin{equation*}
m^2+n^2>0
\end{equation*}
and
\begin{equation*}
\cos\theta=\frac {m}{\sqrt{m^2+n^2}},\qquad
\sin\theta=\frac {n}{\sqrt{m^2+n^2}}.
\end{equation*}
Then
\begin{equation*}
v(p+\sqrt{m^2+n^2},q,t)=v(p,q,t),
\end{equation*}
and from $v_{pp}>-1$ we get the estimate
\begin{equation*}
\abs{v_p}\le \sqrt{m^2+n^2}.
\end{equation*}
The rest of the proof of Theorem \ref{thm:10} can be obtained by a slight modification of the argument used to prove
Theorem \ref{thm:12} and it is left to the reader.

\bigskip

\footnotesize\parindent0pt\parskip8pt

Ernesto Buzano, Anna Fino and Luigi Vezzoni,\\
Dipartimento di Matematica, Universit\`a di Torino, Via
Carlo Alberto 10, 10123 Torino, Italy.\\
E-mail: \texttt{ernesto.buzano@unito.it, annamaria.fino@unito.it, luigi.vezzoni@unito.it}

\end{document}